\numberwithin{equation}{section}
\newcommand{\R}{{\mathbb{R}}}
\newcommand{\beq}{\begin{equation}}
\newcommand{\eeq}{\end{equation}}
\newcommand{\weaklystar}{\rightharpoonup^\ast}
\newcommand{\weakly}{\rightharpoonup}
\def\dist{\text{dist}}
\def\XXint#1#2#3{{\setbox 0=\hbox{$#1{#2#3}{\int}$}
\vcenter{\hbox{$#2#3$}}\kern-.5\wd0}}
\def\rightharpoonupfill@{\arrowfill@\relbar\relbar\rightharpoonup}
\newcommand{\xrightharpoonup}[2][]{\ext@arrow
\rightharpoonupfill@{#1}{#2}} \makeatother
\newtheorem{Theorem}{Theorem}[section]
\newtheorem{Lemma}[Theorem]{Lemma}
\newtheorem{Proposition}[Theorem]{Proposition}
\newtheorem{Corollary}[Theorem]{Corollary}
\newtheorem{Remark}[Theorem]{Remark}
\newtheorem{Definition}[Theorem]{Definition}
\newtheorem{Example}[Theorem]{Example}
\title[Loss of double-integral character during relaxation]{Loss of double-integral character during relaxation}
\author{Carolin Kreisbeck}
\address{Mathematisch Instituut, Universiteit Utrecht, Postbus 80010, 3508 TA Utrecht, The Netherlands}
\email{c.kreisbeck@uu.nl}
\author{Elvira Zappale}
\address{D.I.In., Universit\`a degli Studi di Salerno, Via Giovanni Paolo II 132, 84084 Fisciano, SA, Italy}
\email{ezappale@unisa.it }
\begin{document}

\maketitle
\thispagestyle{empty}

 \begin{abstract} 
We provide explicit examples to show that the relaxation of functionals
\begin{align*}
L^p(\Omega;\R^m) \ni u\mapsto \int_\Omega\int_\Omega W(u(x), u(y))\, dx\, dy,
\end{align*}
where $\Omega\subset\R^n$ is an open and bounded set, $1<p<\infty$ and $W:\R^m\times \R^m\to \R$ a suitable integrand, is in general not of double-integral form. 

This proves an up to now open statement in [Pedregal, \textit{Rev.~Mat.~Complut.} \textbf{29} (2016)] and [Bellido \& Mora-Corral, \textit{SIAM J.~Math.~Anal.} \textbf{50} (2018)]. The arguments are inspired by recent results regarding the structure of (approximate) nonlocal inclusions, in particular, their invariance under diagonalization of the constraining set. 
For a complementary viewpoint, we also discuss a class of double-integral functionals for which relaxation is in fact structure preserving and the relaxed integrands arise from separate convexification. 
\vspace{8pt}

 \noindent\textsc{MSC (2010):} 49J45  (primary); 26B25, 74G65
 
 \noindent\textsc{Keywords:} nonlocality, double integrals, relaxation, lower semicontinuity, nonlocal inclusions
 
 \vspace{8pt}
 
 \noindent\textsc{Date:} \today 
 \end{abstract}

\section{Introduction}
Let $\Omega\subset \R^n$ be a non-empty, open and bounded set and $1<p<\infty$. Moreover, let  $W:\R^m\times \R^m\to \R$ be a lower semicontinuous function satisfying $p$-growth, i.e.,~$ 
W(\xi, \zeta)\leq C(|\xi|^p + |\zeta|^p +1)$ for all $(\xi, \zeta)\in \R^m\times \R^m$ with a constant $C>0$. For any such $W$, we define a double-integral functional
\begin{align}\label{I_W}
I_W(u) =\int_\Omega\int_\Omega W(u(x), u(y))\, dx\, dy
\end{align}
for $u\in L^p(\Omega;\R^m)$. Without loss of generality (see e.g. \cite{Ped97}), one may assume $W$ to be symmmetric, that is, $W(\xi,\zeta)=W(\zeta,\xi)$, for every $(\xi,\zeta)\in \mathbb R^m\times \mathbb R^m$. 

Nonlocal functionals of this type and their inhomogeneous versions with explicit dependence of $W$ on $x, y\in \Omega$ have recently become of increasing interest in the literature.   Besides their nonlocal character, which gives rise to interesting mathematical questions that require the development of new techniques~\cite{BMC18, BeP06, Mun09, Ped16},  this can also be attributed to their relevance in various modern modeling approaches, e.g.~in image processing~\cite{BrN18, FKR15, GiO08}, in machine learning \cite{EDLL14, SOZ17, SlT19}, in the theory of phase transitions~\cite{DFL18, SaV12}, or in continuum mechanics through the theory of peridynamics~\cite{BMP15, ELP13, KMS18, MeD15, Sil00} and crystal plasticity \cite{MMOZ}. 

Under the additional assumption that $W$ is $p$-coercive, i.e., there are constants $c,C>0$ such that
\begin{align*} 
W(\xi, \zeta)\geq c(|\xi|^p+|\zeta|^p) -C\qquad\text{ for all $(\xi, \zeta)\in \R^m\times \R^m$,}
\end{align*}
the existence of minimizers of $I_W$ is guaranteed by the direct method in the calculus of variations, if $I_W$ is $L^p$-weakly lower semicontinuous, or equivalently, if $W$ is separately convex \cite{BeP06, Mun09, Ped16}. In situations when $W$ fails to have this property, 
minimizers of $I_W$ do in general not exist due to oscillation effects.  
 A common strategy to capture the asymptotic behavior of minimizing sequences of $I_W$ is resorting to a related variational problem, called the relaxed problem, which involves 
the $L^p$-weak lower semicontinuous envelope of $I_W$, i.e., for $u\in L^p(\Omega;\R^m)$,
\begin{align}\label{IWrlx}
I_W^{\rm rlx}(u) = \inf \{\liminf_{j\to \infty} I_W(u_j)\,:\, u_j\weakly u\text{ in $L^p(\Omega;\R^m)$}\}.
\end{align} 
The major challenge in relaxation theory lies in finding alternative representations of $I_W^{\rm rlx}$, ideally via closed formulas.  
Contrary to the single-integral case, where a body of works has emerged over the last decades, see e.g.~\cite{Dac08, DMbook} 
 and the references therein, relaxation in the nonlocal setting is still largely unsolved. 
In the following, we give some background and outline briefly the latest developments related to this problem. 

The first paper to present a characterization of $L^p$-weak lower semicontinuity of $I_W$ in the scalar case $m=1$ goes back to Pedregal \cite{Ped97} in the late 1990s.  
Separate convexity of $W$ as a necessary and sufficient condition was identified almost ten years later in \cite{BeP06}, and generalized to the case of vector-valued fields, meaning for $m\geq 1$, in~\cite{Mun09}. More recent results, in particular on the inhomogeneous setting, can be found in \cite{BMC18, Ped16}.  

Motivated by these findings, a natural 
first guess for the relaxed functional associated with $I_W$ 
would seem to be a double integral with the separately convex hull $W^{\rm sc}$ of $W$ as integrand. 
However, there are one-dimensional counterexamples to disprove this conjecture, see e.g.~\cite[Example 3.1]{BeP06} or~\cite[Example 7.2]{BMC18} for integral functionals involving suitably chosen double integrands with eight or six wells, respectively.  Here, Corollary~\ref{cor:neq} and Corollary~\ref{prop:ness}, which both provide different necessary conditions for the relaxation of $I_W$ via separate convexification of $W$, put us in the position to generate a whole class of counterexamples.  Among the simplest ones for $m=1$ are the cases when $W$ is a four-well integrand with minima in 
\begin{align}\label{Wdist}
 \{(1, 0), (-1, 0), (0, 1), (0, -1)\}.
\end{align}

In~\cite{Ped16}, Pedregal claims even more than $I_W^{\rm rlx}\neq I_{W^{\rm sc}}$, namely that $I_W^{\rm rlx}$ may not be representable as a double integral at all. His reasoning is based on a monotonicity argument along the lines of a basic observation for single integrals. As  Bellido \& Mora-Coral point out in~\cite[Section~7]{BMC18}, though, this argument is in general not valid in the nonlocal context, see~Section~\ref{subsec:order} for more details. 

In this paper, we  
present two different proofs 
 to confirm that Pedregal's statement is indeed correct (see Propositions~\ref{prop:counterexample} and ~\ref{prop:counterexample2}). Both approaches 
involve the construction of a counterexample arising from a functional $I_W$, where $W$ is a double integrand of distance type, precisely, 
 \begin{align*}
W(\xi, \zeta) =  \dist^p ((\xi, \zeta), K) \quad \text{for $(\xi, \zeta)\in \R\times \R$,}
 \end{align*} 
 with a suitable combination of a compact set $K\subset \R\times \R$ and a norm inducing the distance; 
notice that it suffices to discuss the one-dimensional setting, since counterexamples in the case $m>1$ follow after a simple modification, cf. Remark~\ref{rem:generalization}.

In Proposition~\ref{prop:counterexample}, we take $K$ as in~\eqref{Wdist} and choose the $1$-norm on $\R\times \R$. 
Assuming to the contrary that $I_W^{\rm rlx}$ is a double integral with integrand $G$, we show that the infimum of $I_G$ is then attained in the origin, and thus $\min_{u\in L^p(\Omega)} I_G(u)= I_G(0,0)=0$; this follows from comparison arguments for nonlocal integral functions as established in Section~\ref{subsec:order}, which sets $G$ in relation to $W$ and $W^{\rm sc}$, and from exploiting that $G$ is separate convex as the double integrand of a weakly lower semicontinuous functional. On the other hand, it turns out that $\inf_{u\in L^p(\Omega)} I_W^{\rm rlx} > |\Omega|^2 \min_{(\xi, \zeta)\in \R\times \R} W(\xi, \zeta)=0$.
The proof is inspired by recent insights into the properties of nonlocal supremal functionals~\cite{KrZ19}.
Especially  the operation of diagonalization of sets  
in the sense of Definition~\ref{def:diagonalization}, applied here to the zero sublevel sets of $W$, and its interplay with approximate nonlocal inclusions, plays a central role; in fact, the latter are invariant under diagonalization as we prove in Theorem~\ref{prop:approx_inclusion}.

 The second counterexample in Proposition~\ref{prop:counterexample2} uses for $K$ the boundary of the convex hull of~\eqref{Wdist}, or equivalently, the boundary of the unit ball in the $1$-norm. In this case, the order relations from Section~\ref{subsec:order} allow us to conclude that if $I_W^{\rm rlx}$ is a double integral, then its integrand needs to coincide with $W^{\rm sc}$. However, we can prove that the correlation between the values $0$ and $1$, which is connected to the fact that $K$ fails to be diagonal, gives rise to $I_{W^{\rm sc}}(v) \neq I_W^{\rm rlx}(v)$
 for any non-constant $v:\Omega\to \{0,1\}$.

Closely related the double integrals we investigate here are nonlocal supremal functionals
 \begin{align}\label{supremal}
L^\infty(\Omega;\R^m)\ni u\mapsto {\rm esssup}_{(x,y)\in \Omega\times \Omega}\, Z(u(x), u(y))
\end{align}
with a suitable symmetric supremand $Z:\R^m\times \R^m\to \R$. 
Indeed, the latter arise (formally) through $L^p$-approximation, that is, in the limit process $p\to \infty$. 
The problem of relaxing~\eqref{supremal} has been settled recently in the case $m=1$ (and for general $m>1$ under an additional technical assumption, see~\cite[Remark 7.6]{KrZ19}): it is shown in~\cite[Theorem~1.3]{KrZ19} that the relaxation of~\eqref{supremal} with $m=1$ is structure preserving, meaning that it is again of supremal form, and that the relaxed supremand corresponds to the separate level convexification of the diagonalization of $Z$, cf.~\eqref{def:diagonalization2}. Here, in contrast, the challenging open question remains: What kind of representation for $I_W^{\rm rlx}$ in~\eqref{IWrlx} can be expected if double integrals are out of the picture? For first steps towards a better understanding, we refer to the Young measure relaxation result in \cite[Theorem 6.1]{BMC18}, as well as to 
 Proposition~\ref{prop:fourwell}, where we contribute a partial result by giving a closed formula for the relaxation of a specific class of double integrals. 

This article is organized as follows. After introducing notation and collecting some auxiliary results in Section~\ref{sec:preliminaries}, Section~\ref{sec:nonlocalinclusions} is concerned with the asymptotic behavior of approximate nonlocal inclusions; in particular, we provide a characterization of Young measures generated by sequences of nonlocal fields of the form $(u(x), u(y))$ for $(x,y)\in \Omega\times \Omega$ subject to approximate pointwise constraints, see Theorem~\ref{theo:Young}. 
Even though these results serve here primarily as technical tools for the remaining paper, they are also interesting in their own right. 
In Section \ref{sec:nesscon}, we address the issue of order relations and comparison arguments for double integrals as in~\eqref{I_W}, and deduce conditions on $W$ that are necessary for the identity $I_W^{\rm rlx} = I_{W^{\rm sc}}$. 
The centerpiece of this paper, namely the two counterexamples to structure preservation during relaxation, are presented, along with their proofs, in Section~\ref{sec:counterexample}. 
For a complementary viewpoint, we close in Section~\ref{sec:examples_relaxation} by discussing  functionals with double integrands in the form of distances to Cartesian sets and extended-valued indicators; the relaxations in both cases give rise to the intuitively expected double integrals with separately convexified integrands.  

\section{Notation and preliminaries}\label{sec:preliminaries}
To make the paper self-contained, we fix notation and collect some well-known results that will be used later on. 

\subsection{Notation} 
We denote the Euclidean norm of a vector $\eta=(\eta_1, \ldots, \eta_d)\in \mathbb R^d$ by $|\eta|= (\sum_{i=1}^d\eta_i^2)^{\frac{1}{2}}$, and use the notation $\|\cdot\|$ for a generic norm on $\R^m\times \R^m$ (without explicit mention, we often idenitfy $\R^m\times \R^m$ with $\R^{2m}$); specific choices of norms in the following include the $1$-norm $\|(\xi, \zeta)\|_1:=|\xi| + |\zeta|$, the Euclidean norm $\|(\xi, \zeta)\|_2=\sqrt{ |\xi|^2 + |\zeta|^2}$, or more generally the $q$-norm $\|(\xi, \zeta)\|_q=( |\xi|^q + |\zeta|^q)^{1/q}$ with $1\leq q<\infty$, and the maximum norm $\|(\xi, \zeta)\|_\infty = \max\{|\xi|, |\zeta|\}$ for $(\xi, \zeta)\in \R^m\times \R^m$.  
Further, $B_r(\xi, \zeta)\subset \R^m\times \R^m$ represents the closed ball of radius $r>0$ centered at $(\xi, \zeta)$, and for the distance of a point $(\xi, \zeta)\in \R^m\times \R^m$ to a non-empty, compact set $K\subset \R^m\times \R^m$, we write 
\begin{align}\label{dist}
\dist((\xi, \zeta), K) = \min_{(\alpha, \beta)\in K} \|(\xi, \zeta) -(\alpha,\beta)\|; 
\end{align}
if relevant, the use of a specific norm is indicated by super- and subscript indices, e.g.~$B_3^1(0,0) = \{(\xi, \zeta)\in \R^m\times \R^m: \|(\xi, \zeta)\|_1\leq 3\}$ or $\dist_\infty(\cdot, K) = \min_{(\alpha, \beta) \in K} \|\cdot - (\alpha, \beta)\|_\infty$. 
The generalized closed interval $[\xi, \zeta]$ with $\xi, \zeta\in \R^m$ is the set $\{\lambda\xi + (1-\lambda)\zeta\in \R^m: \lambda\in [0,1]\}$. 

For the complement of $A\subset \R^{d}$, we write $A^c=\R^{d}\setminus A$, whereas $A^{\rm co}$ stands for the convex hull of A. 
Let $\mathbbm{1}_A$ be the characteristic function of $A$, i.e., 
\begin{equation*}
\mathbbm{1}_A (\eta):=\left\{
\begin{array}{ll} 1 &\hbox{ if $\eta\in A$},\\
0 &\hbox{ otherwise,}
\end{array}
\right.\qquad \eta\in \R^d.
\end{equation*}

To refer to the minimum of a function $f:\R^d\to \R$ (if existent), we usually use the short-hand notation $\min f$ rather than $\min_{\eta\in \R^d} f(\eta)$. 

For any probability measure $\mu\in \mathcal Pr(\R^d)$, 
\begin{align*}
[\mu] := \langle \mu, {\rm id}\rangle = \int_{\R^d} \eta\, d\mu(\eta)
\end{align*} 
stands for its barycenter.
The product measure of $\nu, \mu\in \mathcal Pr(\R^d)$ is denoted by $\nu\otimes \mu$, and for the Lebesgue measure of a Lebesgue measurable set $U\subset\mathbb R^l$, we write ${\mathcal L}^l(U)$, or simply $|U|$. 
We employ standard notation for $L^p$-spaces with $p\in [1, \infty]$; particularly, our way to symbolize weak and weak$^\ast$ convergence of a sequence $(u_j)_j\subset L^p(U;\R^d)$
to a function $u\in L^p(U;\R^d)$ as $j\to \infty$ is 
 $u_j\weakly u$ in $L^p(U;\R^d)$ if $p\in [1, \infty)$ and $u_j \weaklystar u$ in $L^\infty(U;\R^d)$ if $p=\infty$.  Moreover, $S^\infty(U;\R^d)$ refers to the set of simple functions on $U$ with values in $\R^d$.

 Unless stated otherwise, $\Omega$ is a non-empty, open and bounded subset of $\mathbb R^n$ and $p>1$.

 \subsection{ A tool from convex analysis}
The following lemma is a corollary of a standard result in convex analysis, also known as zig-zag lemma (see e.g.~\cite[Lemma 20.2]{DMbook}). For the readers' convenience, we give here a simple explicit construction.

\begin{Lemma}\label{lem:oscillations}
Let $A\subset \R^m$ and suppose that $v\in S^\infty(\Omega;\R^m)$ is a simple function with image in $A^{\rm co}$. Then there exist a sequence $(v_j)_j\subset S^\infty(\Omega;\R^m)$ such that $v_j\in A$ a.e.~in $\Omega$ for all $j\in \mathbb N$ and $v_j\weaklystar v$ in $L^\infty(\Omega;\R^m)$. 
\end{Lemma}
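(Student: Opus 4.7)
The plan is to localize the problem to each level set of $v$ and then invoke a standard periodic oscillation to approximate a constant in $A^{\rm co}$ by simple functions valued in $A$. Since $v$ is simple, I would write $v=\sum_{i=1}^N a_i\,\mathbbm{1}_{E_i}$ with pairwise disjoint measurable $E_i\subset\Omega$ of positive measure and values $a_i\in A^{\rm co}$, so that it suffices to construct, on each $E_i$ separately, a bounded sequence of simple functions taking values in $A$ and converging weakly-$\ast$ to the constant $a_i$ in $L^\infty(E_i;\R^m)$. Summing these pieces then yields the desired $v_j$.

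To handle a single level set, I would invoke Carath\'eodory's theorem to decompose $a_i=\sum_{k=1}^{m+1}\lambda_{i,k}b_{i,k}$ with $b_{i,k}\in A$ and nonnegative weights $\lambda_{i,k}$ summing to one (repeating atoms if fewer are needed). The next step is to fix once and for all a reference partition $\{F_{i,1},\dots,F_{i,m+1}\}$ of the unit cube $(0,1)^n$ with Lebesgue measures $\lambda_{i,k}$ (for instance, axis-aligned slabs), extend each $\mathbbm{1}_{F_{i,k}}$ by $\Zb^n$-periodicity to $\R^n$, and set
\begin{align*}
v_j(x)=\sum_{i=1}^N\sum_{k=1}^{m+1}b_{i,k}\,\mathbbm{1}_{F_{i,k}}(jx)\,\mathbbm{1}_{E_i}(x),\qquad x\in\Omega.
\end{align*}
By construction, $v_j$ is simple and takes its values in $\{b_{i,k}\}\subset A$.

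For the weak-$\ast$ convergence I would test against an arbitrary $\varphi\in L^1(\Omega;\R^m)$ and rely on the classical Riemann--Lebesgue-type averaging lemma, which asserts $\mathbbm{1}_{F_{i,k}}(j\,\cdot\,)\weaklystar\lambda_{i,k}$ in $L^\infty(\R^n)$ as $j\to\infty$; multiplication by the fixed $L^1$ weight $\mathbbm{1}_{E_i}\varphi$ preserves this limit, so the inner sum over $k$ recovers $a_i$ on $E_i$ and the outer sum over $i$ recovers $v$ on $\Omega$. The only point that deserves brief mention is that the $E_i$ are merely measurable rather than, say, Jordan-measurable, but this causes no trouble because weak-$\ast$ convergence in $L^\infty$ is stable under multiplication by $L^1$-functions. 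I do not anticipate any real obstacle: the construction is essentially textbook, and Carath\'eodory is the only place the ambient dimension $m$ enters.
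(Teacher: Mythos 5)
Your argument is correct and follows the same route as the paper's proof: decompose $v$ by its level sets, use Carath\'eodory's theorem to write each value $a_i\in A^{\rm co}$ as a convex combination of $m+1$ points of $A$, and then superimpose a fine periodic oscillation between those points on each level set, verified by a Riemann--Lebesgue averaging argument. The only (minor) difference is in the explicit oscillator: you fix a genuine partition of the unit cube into slabs of the correct measures and scale it by $j$, which automatically yields disjoint level sets for $v_j$, whereas the paper instead defines $\Omega_{l,j}^{(i)}$ as the trace of $\Omega^{(i)}$ on a $\tfrac1j$-periodic array of cubes of sidelength $\tfrac1j\sqrt[n]{\lambda_l^{(i)}}$ — a construction that, as written, does not produce pairwise disjoint sets for different $l$ (and also has an $m$-versus-$n$ slip in the lattice). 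Your slab partition is the cleaner way to say the same thing.
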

\begin{proof}
Let $v=\sum_{i=1}^{N} \xi^{(i)}\mathbbm{1}_{\Omega^{(i)}}$
with $\xi^{(i)}\in A^{\rm co}$ and $\Omega^{(i)}$ disjoint measurable subsets of $\Omega$.

By Caratheodory's theorem (see e.g.~\cite[Theorem~2.13]{Dac08}), each $\xi^{(i)}\in A^{\rm co}$ is the convex combination of $m+1$ elements of $A$, that is, 
$\xi^{(i)} = \sum_{l=1}^{m+1}\lambda_l^{(i)} \xi_l^{(i)}$ with $\xi_l^{(i)}\in A$ and $\lambda_l^{(i)}\in [0,1]$ such that $\sum_{l=1}^{m+1} \lambda_l^{(i)}=1$.  

For any $i\in \{1, \ldots, N\}$ and $l\in \{1, \ldots, m+1\}$, let $\Omega_{l,j}^{(i)}$ with $j\in \mathbb N$ be measurable subsets of $\Omega^{(i)}$ such that  
\begin{align*}
\mathbbm{1}_{\Omega^{(i)}_{l,j}}\weaklystar \lambda_l ^{(i)}\mathbbm{1}_{\Omega^{(i)}}\quad\text{ in $L^\infty(\Omega)$ as $j\to \infty$;}
\end{align*}
this can be achieved for instance by choosing 
\begin{align*}
\Omega_{l,j}^{(i)} = \Omega^{(i)}\cap \bigcup_{z\in \mathbb Z^m} \frac{1}{j}z+\frac{1}{j}\Bigl[0,\sqrt[m]{\lambda_l^{(i)}}\Bigr]^m.
\end{align*} 
Then,
\begin{align*}
v_j^{(i)}:=\sum_{l=1}^{m+1} \xi_l^{(i)}\mathbbm{1}_{\Omega_{l,j}^{(i)}} \weaklystar v\quad\text{ in $L^\infty(\Omega;\R^m)$ as $j\to \infty$.}
\end{align*} 

 With these definitions, the sequence $(v_j)_j$ given by $v_j=\sum_{i=1}^N v_j^{(i)}\mathbbm{1}_{\Omega^{(i)}}$ for $j\in \mathbb N$
has all the desired properties.
\end{proof}

\subsection{Separate (level) convexity of sets and functions}
Convexity notions including separate convexity, separate level convexity and the related envelopes are a recurring theme in this paper. We briefly collect here some basics, referring the reader to \cite[Sections 2, 3 and 4]{KrZ19} for more properties, relations and characterizations of the following definitions. 

 A set $E\subset \R^m\times \R^m$ is called separately convex (with vectorial components), if for every $t\in (0,1)$ and every $(\xi_1, \zeta_1), (\xi_2, \zeta_2)\in E$ with $\xi_1=\xi_2$ or $\zeta_1=\zeta_2$ it holds that
	\begin{align*}
	t(\xi_1, \zeta_1) + (1-t)(\xi_2, \zeta_2) \in E.
	\end{align*}  
	The smallest separately convex set in $\mathbb R^m\times \R^m$ containing $E$ is called the separately convex hull of $E$ and denoted by $E^{\rm sc}$.

 Observe that for any $A\subset \R^m$, 
	\begin{align}
	\label{AcoxAco}
	(A \times A)^{\rm sc} =  (A \times A)^{\rm co} = A^{\rm co}\times A^{\rm co},
	\end{align}
	as a consequence of Carath\'eodory's theorem. 
	
The next definition introduces separate convexity for functions, as well as the weaker notion of separate level convexity. For the latter, recall that a function $f: \mathbb R^d \to \R_\infty:=\R\cup\{\infty\}$ is level convex
	if all (sub)level sets of $f$, that is,
	\begin{align*}
	L_c(f) : = \{\eta\in \R^d: f(\eta)\leq c\} \quad\text{ with $c\in \R$,}
\end{align*}
	are convex.

\begin{Definition} 
	We call $W:\mathbb R^m \times \mathbb R^m \to \R_\infty$ separately convex (with vectorial components) if for every $\xi \in \mathbb R^m$, the functions $W(\cdot, \xi)$ and $W(\xi,\cdot)$ are convex. The function $W$ is {separately level convex} (with vectorial components) if the sets
 $L_c(W) = \{(\xi,\eta)\in\mathbb R^m\times \mathbb R^m: W(\xi,\eta)\leq c\}$ are separately convex for all $c\in\R$. 
\end{Definition}
	
	Moreover, $W^{\rm sc}:\R^m\times \R^m\to \R_\infty$ ($W^{\rm slc}:\R^m\times\R^m\to \R_\infty$) stands for the separately (level) convex envelope of $W$, that is, the largest separately (level) convex function below $W$. Due to the implications between these different notations of convexity, it holds that 
\begin{align}\label{2.3}
W(\xi,\zeta)\geq W^{\rm slc}(\xi,\zeta)\geq W^{\rm sc}(\xi,\zeta)\geq W^{\rm co}(\xi,\zeta)
	\end{align}
for $(\xi, \zeta)\in \R^m\times \R^m$.

\begin{Lemma}
	\label{lemmaWslc}
	Let $V, W :\mathbb R^m \times \mathbb R^m\to \mathbb R$ be such that $V$ is separately level convex and 
	\begin{align}\label{Wslc}
	L_c(V)=  L_c(W)^{\rm sc}	
	\end{align} 
	for every $c \in \mathbb R$.
	Then $V=W^{\rm lsc}$.
	\end{Lemma}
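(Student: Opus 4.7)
The plan is to verify the two defining properties of the separately level convex envelope: $V\leq W$ and that $V$ dominates every separately level convex minorant of $W$. Both are extracted from the hypothesis \eqref{Wslc} by switching between the level-set and point-value perspectives.

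First I would show $V\leq W$ pointwise. Fix $(\xi,\zeta)\in\R^m\times\R^m$ and set $c:=W(\xi,\zeta)$. Then $(\xi,\zeta)\in L_c(W)\subset L_c(W)^{\rm sc}=L_c(V)$ by \eqref{Wslc}, so $V(\xi,\zeta)\leq c = W(\xi,\zeta)$. Since $V$ is separately level convex by assumption, this already gives $V\leq W^{\rm slc}$.

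For the reverse inequality, let $U:\R^m\times\R^m\to\R_\infty$ be any separately level convex function with $U\leq W$; the goal is to deduce $U\leq V$. For each $c\in\R$, the set $L_c(U)$ is separately convex and contains $L_c(W)$ (because $U\leq W$). The minimality of $L_c(W)^{\rm sc}$ as a separately convex set containing $L_c(W)$ then forces
\begin{equation*}
L_c(U)\supset L_c(W)^{\rm sc}=L_c(V).
\end{equation*}
Applying this at the level $c:=V(\xi,\zeta)$ for an arbitrary point yields $U(\xi,\zeta)\leq c=V(\xi,\zeta)$. Taking the supremum over all such $U$ gives $W^{\rm slc}\leq V$, and together with the first step, this proves the claim.

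I do not foresee a serious obstacle: the argument is a clean set-versus-function translation and uses only the hypothesis \eqref{Wslc}, the definition of $L_c$, and the characterization of $W^{\rm slc}$ via its relation to separately level convex minorants. The only subtle point worth double-checking is that the closed (non-strict) sublevel sets $L_c$ correctly encode the function in both directions, i.e.~that $V(\xi,\zeta)=\inf\{c\in\R:(\xi,\zeta)\in L_c(V)\}$, which is immediate for real-valued $V$, and analogously for $U$ (allowing the value $+\infty$ when the infimum is over an empty set). Note also that the conclusion should read $V=W^{\rm slc}$ (the separately level convex envelope), the symbol $W^{\rm lsc}$ in the statement appearing to be a typographical slip.
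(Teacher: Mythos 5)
Your proof is correct and essentially coincides with the paper's argument. The paper compresses both of your steps into the chain of inclusions $L_c(W^{\rm slc})\supset L_c(W)^{\rm sc}\supset L_c(W)$, which reads off as $W^{\rm slc}\leq V\leq W$, and then invokes the separate level convexity of $V$ to upgrade $V\leq W$ to $V\leq W^{\rm slc}$; your unwinding of the inclusion $L_c(W^{\rm slc})\supset L_c(W)^{\rm sc}$ into a statement about arbitrary separately level convex minorants $U$ is the same idea made explicit. You are also right that $W^{\rm lsc}$ in the statement is a typo for $W^{\rm slc}$.
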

\begin{proof}
 Due to $L_c(W^{\rm slc})\supset L_c(W)^{\rm sc}\supset L_c(W)$ for every $c\in \R$, it follows from~\eqref{Wslc} that  
		$$
		W^{\rm slc}\leq V\leq W.
		$$
		The separate level convexity of $V$ concludes the proof.
\end{proof}

Next, we discuss functions of distance type, meaning, $W:\R^m\times \R^m\to \R$ given by
\begin{align}\label{W_distance}
W(\xi, \zeta)=\dist^p((\xi, \zeta), K)\quad\text{ for $(\xi, \zeta)\in \R^m\times \R^m$,}
\end{align} 
where $K\subset \R^m \times \R^m$ is a non-empty and compact set and $p\geq 1$, cf.~also~\eqref{dist}. It is a classical implication of  Caratheodory's theorem that the convex envelope of $W$ as in~\eqref{W_distance} is 
\begin{align}\label{Wco_distance}
W^{\rm co}(\xi, \zeta) = \dist^p((\xi, \zeta), K^{\rm co})
\end{align}
for $(\xi, \zeta)\in \R^m\times \R^m$.

The following lemma presents a class of distance-type functions whose the separately convex envelope is convex.

\begin{Lemma}\label{lem:Wsc=Wco}
If $W$ is as in~\eqref{W_distance} 
with $K^{\rm sc}=K^{\rm co}$, then $W^{\rm sc}=W^{\rm co}$.
\end{Lemma}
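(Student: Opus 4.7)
The plan is to prove the missing inequality $W^{\rm sc}\le W^{\rm co}$, since the reverse $W^{\rm co}\le W^{\rm sc}$ is already contained in~\eqref{2.3}; combined with the identity $W^{\rm co}(\xi,\zeta)=\dist^p((\xi,\zeta),K^{\rm co})$ from~\eqref{Wco_distance}, this closes the lemma.

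First I would fix $(\xi,\zeta)\in\R^m\times\R^m$ and pick $(\alpha,\beta)\in K^{\rm co}$ realizing the distance, so that $\|(u,v)\|=\dist((\xi,\zeta),K^{\rm co})$ with $(u,v):=(\xi-\alpha,\zeta-\beta)$. The hypothesis $K^{\rm sc}=K^{\rm co}$ places $(\alpha,\beta)$ in the separately convex hull of $K$, so $(\alpha,\beta)$ is the barycenter of a laminate, i.e., a finite probability measure $\sum_i \lambda_i \delta_{(\alpha_i,\beta_i)}$ supported on $K$ and assembled by iterated convex combinations along segments whose endpoints share one block of coordinates. Translating every support point by $(u,v)$ gives the laminate $\sum_i \lambda_i \delta_{(\alpha_i+u,\beta_i+v)}$ with barycenter $(\xi,\zeta)$ and exactly the same combinatorial structure, since the pairwise differences (and thus the separable splitting directions) are preserved under this translation. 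Applying separate Jensen's inequalities iteratively to the separately convex $W^{\rm sc}$ along the translated laminate, and then using $W^{\rm sc}\le W$, I obtain
\begin{equation*}
W^{\rm sc}(\xi,\zeta)\le \sum_i \lambda_i W(\alpha_i+u,\beta_i+v)=\sum_i \lambda_i \dist^p\bigl((\alpha_i+u,\beta_i+v),K\bigr)\le \sum_i \lambda_i \|(u,v)\|^p=\|(u,v)\|^p,
\end{equation*}
where the last inequality uses that the distance from $(\alpha_i+u,\beta_i+v)$ to $K$ is bounded by its distance to the point $(\alpha_i,\beta_i)\in K$, which is $\|(u,v)\|$. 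Since $\|(u,v)\|^p=\dist^p((\xi,\zeta),K^{\rm co})=W^{\rm co}(\xi,\zeta)$, the desired bound follows.

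The main obstacle is making the laminate representation of points in $K^{\rm sc}$ precise, because the separately convex hull of a set is in general built via iterated convex combinations that need not terminate after finitely many steps. A clean way to handle this is the layered description $K_0:=K$ and $K_{n+1}:=\{tp+(1-t)q:t\in[0,1],\ p,q\in K_n,\ p-q\in (\R^m\times\{0\})\cup(\{0\}\times \R^m)\}$, with $K^{\rm sc}=\overline{\bigcup_n K_n}$, and then to prove the estimate $W^{\rm sc}(\alpha+u,\beta+v)\le\|(u,v)\|^p$ by induction on $n$: the bound trivially holds for $(\alpha,\beta)\in K_0=K$, and the inductive step from $K_n$ to $K_{n+1}$ uses separate convexity of $W^{\rm sc}$ in the block in which the endpoints agree. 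Passage to the closure relies on continuity of $W^{\rm sc}$, which is standard for separately convex envelopes of continuous functions of polynomial growth in finite dimensions.
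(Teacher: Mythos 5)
Your argument is correct and reaches the result by a genuinely different route from the paper's. The paper proves the lemma entirely at the level of sublevel sets: it introduces $V=\dist^p(\cdot,K^{\rm sc})$, observes that $V$ is convex because $K^{\rm sc}=K^{\rm co}$ together with~\eqref{Wco_distance} identify it with $W^{\rm co}$, and then shows $L_c(V)=K^{\rm sc}+B_{c^{1/p}}(0,0)=(K+B_{c^{1/p}}(0,0))^{\rm sc}=L_c(W)^{\rm sc}$ for all $c$, so that Lemma~\ref{lemmaWslc} gives $V=W^{\rm slc}$ and~\eqref{2.3} collapses the chain $W^{\rm slc}\geq W^{\rm sc}\geq W^{\rm co}=V$ into equalities. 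Your proof instead works directly with the envelope function: you pick a closest point $(\alpha,\beta)\in K^{\rm co}=K^{\rm sc}$, translate a laminate decomposition of $(\alpha,\beta)$ by $(u,v)=(\xi-\alpha,\zeta-\beta)$ (noting that the rank-one directions are translation invariant), and run an iterated Jensen argument for $W^{\rm sc}$ down to points of $K$, where the distance to $K$ is trivially $\leq\|(u,v)\|$. What the paper's route buys is compactness of the verification: everything reduces to Minkowski-sum identities of sets and the auxiliary notion of separate level convexity. What your route buys is a self-contained, more elementary argument that does not invoke $W^{\rm slc}$ or Lemma~\ref{lemmaWslc} at all and makes the geometric mechanism (translation of laminates) explicit.

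One small simplification you could make: you do not actually need the closure in the layered description, since $\bigcup_n K_n$ is itself separately convex (any two points with a separately-directed difference lie in some common $K_n$, hence their segment lies in $K_{n+1}$), contains $K$, and is contained in every separately convex superset of $K$, so $K^{\rm sc}=\bigcup_n K_n$ exactly. This makes the finite-laminate representation available for every point of $K^{\rm sc}$ and lets you drop the appeal to continuity of $W^{\rm sc}$. That appeal is legitimate --- finite-valued separately convex functions on $\R^{2m}$ are locally Lipschitz --- but it is an unnecessary detour here.
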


\begin{proof}
We set $V=\dist^p(\cdot, K^{\rm sc})$. Since the separately convex and the convex hulls of $K$ coincide, one knows from~\eqref{Wco_distance} that $V=W^{\rm co}$ is convex. 
On the other hand, 
\begin{align*}
L_c(V) = K^{\rm sc} + B_{c^{1/p}}(0,0)= \bigl(K+B_{c^{1/p}}(0,0)\bigr)^{\rm sc} = L_c(W)^{\rm sc}
\end{align*}
for all $c\in \R$, and hence, $V=W^{\rm slc}$ by Lemma~\ref{lemmaWslc}. Summing up, this shows that $W^{\rm sc}=V=W^{\rm co}$ in view of~\eqref{2.3}.
\end{proof}

\subsection{The concept of diagonalization}\label{subsec:diagonalization} We recall some terminology related to the diagonalization of symmetric subsets of $\R^m\times\R^m$ as introduced in~{\cite[(4.1)]{KrZ19}}. A set $E\subset \R^m\times \R^m$ is symmetric if $(\xi,\zeta)\in E$ if and only if $(\zeta, \xi)\in E$. 

\begin{Definition}\label{def:diagonalization}
Let $E\subset \R^m\times \R^m$ be symmetric, then
\begin{align}\label{Khat}
\widehat{E} = \{(\xi, \zeta)\in E: (\xi, \xi), (\zeta, \zeta)\in E\} \subset \R^m\times \R^m
\end{align}
is called the diagonalization of $E$. We also use the alternative notation $E^\wedge$. 
\end{Definition}

Note that for any symmetric $E\subset \R^m\times \R^m$,
\begin{align}\label{EhatP}
\widehat{E}=\bigcup_{P\in \mathcal P_E}P;
\end{align} 
here, $\mathcal P_E$ stands for the set of maximal Cartesian subsets of $E$. A set $P\subset E$ is a maximal Cartesian subset of $E$ if $P=A\times A$ with $A\subset \R^m$ and if for any $B\subset \R^m$ with $A\subset B$ and $B\times B\subset E$ it holds that $B=A$. As a simple consequence of the definitions above, 
\begin{align}\label{PK=PKhat}
\mathcal P_E=\mathcal P_{\widehat E}
\end{align}
Moreover, if $K\subset \R^m\times\R^m$ is symmetric and compact, then also $\widehat K$ is compact. 

\subsection{Double integrals} We associate with any suitable $W:\R^m\times \R^m\to \R_\infty$ the double-integral functional $I_W:L^p(\Omega;\R^m)\to \R_\infty$ with
\begin{align}\label{I_W2}
I_W(u) :=\int_\Omega\int_\Omega W(u(x), u(y))\, dx\, dy
\end{align}
for $u\in L^p(\Omega;\R^m)$. To keep notations light, we dispense with highlighting explicitly the dependence on $p$ and $\Omega$, which will always be clear from the context. 

The nonlocal field $v_{w}\in L^p(\Omega\times \Omega;\R^m\times \R^m)$ corresponding to a function $w\in L^p(\Omega;\R^m)$ with $p\geq 1$ is defined as
\begin{align}\label{vw}
v_w(x,y) := (w(x), w(y))\quad \text{ for a.e.~$(x,y)\in \Omega\times \Omega$. }
\end{align} 

\section{Approximate nonlocal inclusions}\label{sec:nonlocalinclusions}

Throughout this section, let  $E\subset \R^m\times \R^m$ be symmetric.  

As in~\cite{KrZ19}, all essentially bounded solutions $u:\Omega\to \R^m$ to the (exact) nonlocal inclusion 
\begin{align}\label{inclusion3}
(u(x), u(y))\in E\quad \text{ for a.e.~$(x,y)\in \Omega\times \Omega$}
\end{align}
are collected in the set $\mathcal A_E$. In view of~\eqref{vw},  
\begin{align*}
\mathcal{A}_E=\{u\in L^\infty(\Omega;\R^m): v_u\in E \text{ a.e.~in $\Omega\times \Omega$}\},
\end{align*} 
and we introduce
 \begin{align}\label{AcalK}
\mathcal{A}_{E}^\infty :=\{u\in L^\infty(\Omega;\R^m): u_j\weaklystar u \text{ in $L^\infty(\Omega;\R^m)$ with $(u_j)_j \subset \mathcal A_E$}\}
\end{align} 
to describe the limiting behavior of sequences in $\mathcal A_E$. 
Upon relaxing the strict requirement of the exact nonlocal inclusion~\eqref{inclusion3}, one obtains an approximate version whose asymptotics is encoded in 
\begin{align}\label{BcalK_infty}
\begin{array}{l}
\mathcal{B}_{E}^\infty :=\{u\in L^\infty(\Omega;\R^m): u_j\weaklystar u \text{ in $L^\infty(\Omega;\R^m)$ with $(u_j)_j \subset L^\infty(\Omega;\R^m)$ such that} \\[0.2cm] \hspace{4.2cm} \text{${\rm dist}(v_{u_j}, E) \to 0$ in measure as $j\to \infty$}\}.
\end{array}
\end{align} 

Clearly, $\mathcal A_E^\infty\subset \mathcal B_{E}^\infty$. 
Under the additional assumption of compactness, we show equality of these two sets and provide a new characterization, valid in any dimension. 

\begin{Theorem}\label{prop:approx_inclusion} 
Let $K \subset \mathbb R^m \times \mathbb R^m$ be symmetric and compact. Then, 
\begin{align}\label{Acal_Kinfty}
\mathcal A_K^\infty= \mathcal B_K^\infty 
= \{u\in L^\infty(\Omega;\R^m): \text{$u\in A^{\rm co}$ a.e.~in $\Omega$ with $A\times A\in \mathcal P_K$}\},
\end{align}
recalling that $\mathcal P_K$ is the set of maximal Cartesian subsets of $K$.
\end{Theorem}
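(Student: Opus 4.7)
The plan is to close the chain $\mathcal{C} \subset \mathcal{A}_K^\infty \subset \mathcal{B}_K^\infty \subset \mathcal{C}$, where $\mathcal{C}$ denotes the third set on the right-hand side of \eqref{Acal_Kinfty}; the inclusion $\mathcal{A}_K^\infty \subset \mathcal{B}_K^\infty$ is immediate from the definitions \eqref{AcalK} and \eqref{BcalK_infty}.

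For $\mathcal{C} \subset \mathcal{A}_K^\infty$, the idea is a two-step approximation. Given $u$ with values in $A^{\rm co}$ a.e.~for some $A$ with $A\times A\in \mathcal P_K$ (noting that $A$ is bounded, since $K$ is compact), first approximate $u$ a.e.~by simple functions $v_k\in S^\infty(\Omega;\R^m)$ with values in $A^{\rm co}$, so that $v_k\weaklystar u$ in $L^\infty$ by dominated convergence; then apply Lemma~\ref{lem:oscillations} to each $v_k$ to obtain $A$-valued simple functions $u_{k,j}$ with $u_{k,j}\weaklystar v_k$ as $j\to\infty$. Since $(u_{k,j}(x),u_{k,j}(y))\in A\times A\subset K$ a.e., we have $u_{k,j}\in \mathcal A_K$, and a diagonal extraction in the metrizable weak$^\ast$ topology on bounded subsets of $L^\infty$ produces the desired sequence in $\mathcal A_K$ weak$^\ast$-converging to $u$.

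For $\mathcal{B}_K^\infty \subset \mathcal{C}$, let $(u_j)\weaklystar u$ with $\dist(v_{u_j},K)\to 0$ in measure; note $(u_j)$ is automatically bounded in $L^\infty$. Extract a subsequence generating a Young measure $\mu=(\mu_x)_{x\in\Omega}$, and invoke Theorem~\ref{theo:Young} (the product structure of Young measures generated by nonlocal fields) to identify the Young measure of $(v_{u_j})_j$ on $\Omega\times\Omega$ as $(\mu_x\otimes\mu_y)_{(x,y)}$. Combined with the equi-integrability of $\dist(v_{u_j},K)$ and the fundamental theorem of Young measures, this forces $\mathrm{supp}(\mu_x)\times\mathrm{supp}(\mu_y)\subset K$ for a.e.~$(x,y)\in \Omega\times\Omega$.

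The heart of the proof, and its main obstacle, is upgrading this pointwise-a.e.~condition to a single set $A\subset \R^m$ with $A\times A\subset K$ and $\mathrm{supp}(\mu_x)\subset A$ outside a nullset. To this end, set
\begin{align*}
A := \bigl\{\xi\in \R^m : |\{x\in \Omega : \mu_x(B_r(\xi))>0\}|>0 \text{ for every } r>0\bigr\},
\end{align*}
which is closed by construction. That $\mathrm{supp}(\mu_x)\subset A$ for every $x$ follows directly from the definition of the support and the measurability of $x\mapsto \mu_x(B_r(\xi))$; the inclusion $A\times A\subset K$ follows from a Fubini argument: for $\xi,\zeta\in A$ and $r>0$, the sets $\{x:\mu_x(B_r(\xi))>0\}$ and $\{y:\mu_y(B_r(\zeta))>0\}$ have positive measure, so their product intersects the co-null set $\{(x,y):\mathrm{supp}(\mu_x)\times\mathrm{supp}(\mu_y)\subset K\}$, producing a point of $K$ within distance $O(r)$ of $(\xi,\zeta)$; letting $r\to 0$ and using closedness of $K$ gives $(\xi,\zeta)\in K$. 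Finally, Zorn's lemma extends $A\times A$ to some $A'\times A'\in \mathcal P_K$ (unions along chains of Cartesian subsets of $K$ remain Cartesian and contained in $K$), and $u(x)=[\mu_x]\in \mathrm{supp}(\mu_x)^{\rm co}\subset (A')^{\rm co}$ a.e.~yields $u\in \mathcal C$.
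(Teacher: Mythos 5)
Your proof is correct in substance, but it follows a genuinely different route from the paper's, and one of its steps is stated slightly too strongly.

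The paper derives Theorem~\ref{prop:approx_inclusion} entirely from Theorem~\ref{theo:Young} by ``taking barycenters'': since $\widetilde{\mathcal Y}_K^\infty = \bigcup_{P\in\mathcal P_K}\mathcal Y_P$, any Young measure generated by a competing sequence already lives in a single $\mathcal Y_{A\times A}$, which immediately gives the representation. You instead take from the machinery only the weaker facts that the Young measure has the product form $\mu_x\otimes\mu_y$ (that is Pedregal's result, cf.~\cite[Prop.~2.3]{Ped97}, not really the content of Theorem~\ref{theo:Young}) and that $\operatorname{supp}(\mu_x)\times\operatorname{supp}(\mu_y)\subset K$ a.e.~on $\Omega\times\Omega$, and then re-derive the ``single Cartesian set'' step by directly constructing the essential union $A$ of the supports and checking $A\times A\subset K$ by Fubini. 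This is in effect an alternative, constructive proof of the key inclusion $\mathcal Y_K\subset\bigcup_{P\in\mathcal P_K}\mathcal Y_P$, replacing the paper's two contradiction arguments built on Lemma~\ref{lem:supp}. Likewise, for the inclusion $\mathcal C\subset\mathcal A_K^\infty$ you use the zig-zag Lemma~\ref{lem:oscillations} plus a diagonal extraction, which is more elementary than routing through the identity $\mathcal Y_P\subset\mathcal Y_K^\infty$ as the paper does. Both swaps are valid and arguably instructive.

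One correction is needed. You assert that ``$\operatorname{supp}(\mu_x)\subset A$ for every $x$ follows directly from the definition of the support and the measurability of $x\mapsto\mu_x(B_r(\xi))$.'' As stated this is false: if $\mu_{x_0}$ is exceptional on a null set $\{x_0\}$, its support need not lie in $A$. What is true, and what you actually need, is the a.e.\ statement, and it is not a direct consequence of the definitions; it requires a countable covering argument. Since $A$ is closed, $A^c$ is open, and (using that all $\mu_x$ are supported in a fixed compact set because $(u_j)$ is bounded in $L^\infty$) one covers $A^c$ by countably many balls $B_{r_k}(\xi_k)$ with $\xi_k\notin A$ and $\bigl|\{x:\mu_x(B_{r_k'}(\xi_k))>0\}\bigr|=0$ for some $r_k'>r_k$; intersecting the corresponding co-null sets over $k$ gives $\mu_x(A^c)=0$ for a.e.~$x$, hence $\operatorname{supp}(\mu_x)\subset A$ for a.e.~$x$. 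With this fixed, the remaining steps (closedness of $A$, the Fubini argument, Zorn's lemma to pass to a maximal Cartesian $A'\times A'\in\mathcal P_K$, and $u(x)=[\mu_x]\in(A')^{\rm co}$) are all sound, and the proof goes through.
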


\begin{Remark}\label{rem:inclusions}
a) The sets $\mathcal A_K^\infty$ and $\mathcal B_K^\infty$ remain unchanged under diagonalization of $K$, that is, $\mathcal A_K^\infty=\mathcal A_{\widehat K}^\infty$ and $\mathcal B_{K}^\infty=\mathcal B_{\widehat K}^\infty$. 
Since  $\mathcal P_K=\mathcal P_{\widehat K}$ by~\eqref{PK=PKhat}, this is apparent from the representation~\eqref{Acal_Kinfty}.

Even though based on a different argument, the diagonalization invariance of $\mathcal A_K^\infty$ has been observed before in~\cite{KrZ19}; indeed,~\cite[Proposition~5.1]{KrZ19} yields that $\mathcal A_K=\mathcal A_{\widehat K}$, which implies $\mathcal A_K^\infty=\mathcal A_{\widehat K}^\infty$ in view of~\eqref{AcalK}. 

b) An equivalent way of expressing the characterization formula in~\eqref{Acal_Kinfty} is
\begin{align}\label{rem123}
\mathcal A_K^\infty = \mathcal B_K^\infty = \bigcup_{A\times A\in \mathcal P_K} \mathcal A_{A^{\rm co}\times A^{\rm co}} = \mathcal A_{\bigcup_{A\times A\in \mathcal P_K}A^{\rm co}\times A^{\rm co}},
\end{align}
cf.~also~\eqref{relax_indi}. Under an additional assumption on $K$, which is always satisfied for $m=1$, it was shown in~\cite[Theorem~1.1]{KrZ19} that 
\begin{align}\label{rem124}
\mathcal A_K^\infty=\mathcal A_{\widehat K^{\rm sc}},
\end{align} 
where $\widehat K^{\rm sc}$ is the separately convex hull of $\widehat K$. 

c) The assumption that the set $K$ in Theorem~\ref{prop:approx_inclusion} is closed cannot be dropped. To see this, we refer to~\cite[Remark~5.2]{KrZ19} for a simple example of a symmetric, non-closed set $E\subset \R^m\times \R^m$ and a set $\Omega\subset \R^m$ such that $\emptyset =\mathcal A_{\widehat E}\neq \mathcal A_{E}$. This implies in particular that $\mathcal A_E^\infty\neq \mathcal A_{\widehat E}^\infty$, and hence,~\eqref{Acal_Kinfty} cannot be true.
\end{Remark}

We postpone the proof of Theorem~\ref{prop:approx_inclusion} to the end of the section, since it is a consequence of the characterization of Young measures generated by sequences subject to approximate nonlocal constraints, which we   address next.

Following the notation of~\cite[Section~2.2]{KrZ19}, we consider 
the sets of parameterized measures
\begin{align}\label{YK}
\begin{array}{l}
\mathcal Y_E  := \{\Lambda \in L_w^\infty(\Omega\times \Omega; \mathcal Pr(\R^m\times \R^m)): \Lambda_{(x,y)}=\nu_x\otimes \nu_y \text{ with $\nu\in L_w^\infty(\Omega;\mathcal Pr(\R^m))$}  \\[0.2cm] 
\hspace{6.8cm} \text{and } {\rm supp\,} \Lambda_{(x,y)} \subset E \text{ for a.e.~$(x,y)\in \Omega\times\Omega$}\},
\end{array}
\end{align} 
\begin{align*}
\begin{array}{l} 
\mathcal Y_E^\infty :=\{\Lambda\in L_w^\infty(\Omega\times \Omega;\mathcal Pr(\R^m\times \R^m)): v_{u_j} \stackrel{YM}{\longrightarrow} \Lambda \text{ with $(u_j)_j\subset \mathcal A_E$}\},
\end{array}
\end{align*}
and
\begin{align}\label{YtildeE}
\begin{array}{l}
\widetilde{\mathcal Y}_{E}^\infty  :=\{\Lambda\in L_w^\infty(\Omega\times \Omega;\mathcal Pr(\R^m\times \R^m)):  v_{u_j} \stackrel{YM}{\longrightarrow} \Lambda \text{ with $(u_j)_j\subset L^\infty(\Omega;\R^m)$ such that} \\[0.2cm] \hspace{6.9cm}\text{${\rm dist}(v_{u_j}, E)\to 0$ in measure as $j\to \infty$}\};
\end{array}
\end{align}
here, $L_w^\infty(U;\mathcal Pr(\R^d))$ denotes the space of weakly measurable functions defined on an open set $U\subset \R^{l}$ with values in the space of probability measures on $\R^d$.  By $v_j \stackrel{YM}{\longrightarrow} \mu$, we mean that a sequence  $(v_j)_j\subset L^\infty(U;\R^d)$ generates the Young measure $\mu \in L^\infty_w(U;\mathcal Pr(\R^d))$, see e.g. \cite{FoL07, Ped97, Rin18} for more details. 

It was shown in~\cite[(5.21) and Theorem~5.11]{KrZ19} that for symmetric and compact $K\subset \R^m\times \R^m$,
\begin{align}\label{inclusionsYMold}
\bigcup_{P\in \mathcal P_{K}} \mathcal Y_P = \mathcal Y_K^\infty 
  \subset \widetilde{\mathcal Y}_{K}^\infty=
\mathcal Y_{K}.
\end{align}

In light of Proposition~\ref{lem:YM}, which is proven below, all four sets in~\eqref{inclusionsYMold} have to coincide. This gives rise to the following theorem. 

\begin{Theorem}\label{theo:Young}
Let $K\subset \R^m\times \R^m$ be compact and symmetric. Then 
\begin{align}\label{relation_YM}
\widetilde{\mathcal{Y}}_{K}^\infty 
=  \mathcal{Y}_K^\infty = \mathcal Y_{K} =\bigcup_{P\in \mathcal P_{K}}\mathcal Y_P.
\end{align}
\end{Theorem}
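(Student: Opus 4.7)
The plan is to exploit the chain already recorded in \eqref{inclusionsYMold}, namely
\begin{align*}
\bigcup_{P\in \mathcal P_K} \mathcal Y_P = \mathcal Y_K^\infty \subset \widetilde{\mathcal Y}_K^\infty = \mathcal Y_K,
\end{align*}
so that to establish \eqref{relation_YM} it is enough to prove the single missing inclusion $\mathcal Y_K \subset \bigcup_{P\in \mathcal P_K} \mathcal Y_P$; closing the chain then forces all four sets to coincide. I expect this to be exactly the content of the forthcoming Proposition~\ref{lem:YM}.

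To prove this inclusion, I would fix $\Lambda\in \mathcal Y_K$ with $\Lambda_{(x,y)}=\nu_x\otimes \nu_y$ and introduce the essential union of supports
\begin{align*}
A:=\bigl\{\xi\in \R^m: |\{x\in \Omega: \nu_x(U)>0\}|>0\ \text{for every open}\ U\ni \xi\bigr\},
\end{align*}
which is closed and, being contained in the projection of $K$, in fact compact. The two key properties to verify are $A\times A\subset K$ and ${\rm supp}\,\nu_x\subset A$ for a.e.~$x$. The first follows by contradiction: if $(\xi,\zeta)\in A\times A$ were outside the closed set $K$, one can pick open neighborhoods $U\ni\xi$ and $V\ni\zeta$ with $(U\times V)\cap K=\emptyset$, and observe that $\{x:\nu_x(U)>0\}\times\{y:\nu_y(V)>0\}$ has positive measure in $\Omega\times\Omega$; on this product set ${\rm supp}(\nu_x\otimes\nu_y)$ would meet the open set $U\times V$ disjoint from $K$, contradicting the a.e.~support condition on $\Lambda$. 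The second follows from a countable-basis argument enabled by the separability of $\R^m$: for each basic open set $U_n$ with $|\{x:\nu_x(U_n)>0\}|=0$, one removes a null set to enforce $U_n\cap{\rm supp}\,\nu_x=\emptyset$, and the countable union of these null sets is still null.

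Once these two properties are in place, a Zorn's lemma argument on the family $\{B\subset \R^m: A\subset B,\ B\times B\subset K\}$ (whose chains are bounded above by their union, which remains admissible by the same reasoning as for $A$) yields a maximal element $A^*\supset A$; by construction $P:=A^*\times A^*\in \mathcal P_K$, and ${\rm supp}\,\Lambda_{(x,y)}\subset A^*\times A^*=P$ holds for a.e.~$(x,y)$, i.e.,  $\Lambda\in \mathcal Y_P$, as claimed.

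The main obstacle is measure-theoretic in nature: a naive Fubini reduction does \emph{not} suffice to produce a single global Cartesian set $A\times A\subset K$ capturing the supports of all $\nu_x$ simultaneously, since an almost-everywhere condition on $\Omega\times \Omega$ need not restrict to a product of full-measure sets in $\Omega$ (the diagonal is a standard obstruction). The essential-union construction above circumvents this, but it relies crucially both on the separability of $\R^m$ and on the closedness of $K$ --- as Remark~\ref{rem:inclusions}(c) underlines, the latter cannot be dispensed with.
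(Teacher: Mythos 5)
Your proposal is correct, and the high-level strategy --- closing the chain \eqref{inclusionsYMold} by proving the remaining inclusion $\mathcal Y_K\subset\bigcup_{P\in\mathcal P_K}\mathcal Y_P$ --- matches the paper exactly, as you anticipate; that missing inclusion is precisely the content of Proposition~\ref{lem:YM}. The argument you give for it, however, takes a genuinely different route. The paper proves Proposition~\ref{lem:YM} for a general symmetric $E$ in two contradiction steps (first $\mathcal Y_E\subset\mathcal Y_{\widehat E}$, then $\mathcal Y_{\widehat E}\subset\bigcup_{P\in\mathcal P_{\widehat E}}\mathcal Y_P$), each reducing to the product-measure support lemma (Lemma~\ref{lem:supp}); no maximal Cartesian set is ever exhibited. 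You instead build one explicitly: the essential union of supports $A$ is closed (and compact, being contained in the projection of $K$), the countable-basis argument secures $\mathrm{supp}\,\nu_x\subset A$ a.e., closedness of $K$ gives $A\times A\subset K$, and Zorn's lemma enlarges $A$ to a maximal $A^*$ with $P=A^*\times A^*\in\mathcal P_K$ and $\Lambda\in\mathcal Y_P$.

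The trade-offs are real but immaterial for this theorem. The paper's Proposition~\ref{lem:YM} is stated for arbitrary symmetric $E$, with compactness of $K$ entering only through~\eqref{inclusionsYMold}; your construction requires closedness of $K$ already at the step $A\times A\subset K$, which you correctly flag (and which is anyway granted here). In exchange, your proof is self-contained, bypasses Lemma~\ref{lem:supp} entirely, and produces the witnessing maximal Cartesian set $P$ constructively rather than by contradiction. Your diagnosis of the central difficulty --- that an a.e.\ condition on $\Omega\times\Omega$ does not restrict to a product of full-measure sets in $\Omega$ --- is exactly the obstruction both proofs have to route around.
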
 

Due to~\eqref{PK=PKhat}, all the sets in~\eqref{relation_YM} are invariant under diagonalization of $K$. In particular, $\mathcal Y_K=\mathcal Y_{\widehat K}$. 

 The next lemma serves as the main tool for the proof of Proposition~\ref{lem:YM}. 
\begin{Lemma}\label{lem:supp}
Let $\nu, \mu\in \mathcal Pr(\R^m)$ and $\Lambda=\nu\otimes \mu\in \mathcal Pr(\R^m\times \R^m)$. 
If $(\xi, \zeta), (\zeta, \xi), (\alpha, \beta), (\beta, \alpha) \in {\rm supp\,} \Lambda$, then 
\begin{align*}
\{\xi, \zeta, \alpha, \beta\}\times \{\xi, \zeta, \alpha, \beta\} 
\subset {\rm supp}\, \Lambda. 
\end{align*}
\end{Lemma}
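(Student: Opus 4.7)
The plan is to exploit the fact that the support of a product measure on a product space factorizes: for $\Lambda = \nu\otimes \mu$ with $\nu, \mu\in \mathcal Pr(\R^m)$, one has $\mathrm{supp}\,\Lambda = \mathrm{supp}\,\nu \times \mathrm{supp}\,\mu$. This is standard and follows quickly from the defining property of the support together with the fact that for a product of open sets $U\times V$ one has $\Lambda(U\times V)=\nu(U)\mu(V)$, so $\Lambda$ gives positive mass to every open neighborhood of a point $(a,b)$ if and only if $a\in \mathrm{supp}\,\nu$ and $b\in \mathrm{supp}\,\mu$.

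Once this identification is available, the rest of the argument is essentially combinatorial. First, I would translate each of the four hypotheses into membership statements for the individual supports. Namely, $(\xi,\zeta)\in \mathrm{supp}\,\Lambda$ yields $\xi\in \mathrm{supp}\,\nu$ and $\zeta\in \mathrm{supp}\,\mu$, while the swapped pair $(\zeta,\xi)\in \mathrm{supp}\,\Lambda$ yields $\zeta\in \mathrm{supp}\,\nu$ and $\xi\in \mathrm{supp}\,\mu$; the analogous conclusions for $\alpha,\beta$ come from the remaining two assumptions. Consequently, each of the four elements $\xi, \zeta, \alpha, \beta$ belongs to $\mathrm{supp}\,\nu \cap \mathrm{supp}\,\mu$.

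Thus $\{\xi,\zeta,\alpha,\beta\}\subset \mathrm{supp}\,\nu$ and $\{\xi,\zeta,\alpha,\beta\}\subset \mathrm{supp}\,\mu$, from which
\begin{align*}
\{\xi,\zeta,\alpha,\beta\}\times \{\xi,\zeta,\alpha,\beta\} \subset \mathrm{supp}\,\nu \times \mathrm{supp}\,\mu = \mathrm{supp}\,\Lambda,
\end{align*}
which is the desired conclusion.

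There is no real obstacle here; the only conceptual point worth underlining is the role of the symmetry in the hypothesis, i.e., the presence of both $(\xi,\zeta)$ and $(\zeta,\xi)$ (and analogously for $\alpha,\beta$) in $\mathrm{supp}\,\Lambda$. Without these swapped pairs one would only know that $\xi,\alpha\in\mathrm{supp}\,\nu$ and $\zeta,\beta\in\mathrm{supp}\,\mu$, which is not enough to fill in the full Cartesian square. With the swapped pairs, each of the four points lies in \emph{both} marginal supports, which is precisely what the product-support identification needs in order to deliver all sixteen pairs.
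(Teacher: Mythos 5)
Your proof is correct and follows essentially the same idea as the paper: the paper establishes, by hand, exactly the instance of the product-support identity $\mathrm{supp}(\nu\otimes\mu)=\mathrm{supp}\,\nu\times\mathrm{supp}\,\mu$ that it needs (via open rectangles $A\times B$ and translations), whereas you invoke that standard factorization up front and then run the same combinatorial bookkeeping. The two are the same argument with different packaging, yours being slightly more modular.
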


\begin{proof}
It suffices to prove that one element of $\{\xi, \zeta, \alpha, \beta\}\times \{\xi, \zeta, \alpha, \beta\}$ different from $(\xi, \zeta)$, $(\zeta, \xi)$, $(\alpha, \beta)$ and $(\beta, \alpha)$ is contained in ${\rm supp\, }\Lambda$, say $(\xi, \alpha)$. For the other elements, the argument is analogous. 

Recalling the definition of the support of $\Lambda$, that is,
\begin{align*}
{\rm supp\,} \Lambda= \{(\xi, \zeta)\in \R^m\times \R^m: \Lambda(U)>0 \text{ for any open neighborhood $U$ of $(\xi, \zeta)$}\},
\end{align*}
let $U$ be an open neighborhood of $(\xi, \alpha)$.  
 Within $U$, one can find another neighborhood of the form $A\times B\subset U$ with $A, B\subset \R^m$ open such that $\xi\in A$ and $\alpha\in B$. From
\begin{align}\label{2}
\Lambda(U) \geq \Lambda(A\times B) = (\nu\otimes \mu)(A\times B) = \nu(A)\mu(B)>0,
\end{align}
we conclude that $(\xi, \alpha)\in {\rm supp\,}\Lambda$, as desired.
For the last inequality in~\eqref{2}, we have used that $A\times (B-\alpha+\zeta)$ is an open set containing $(\xi, \zeta)$, and thus, 
\begin{align*}
0<\Lambda(A\times (B-\alpha+\zeta)) = \nu(A)\mu(B-\alpha+\zeta)
\end{align*} 
by the assumption that $(\xi, \zeta)\in {\rm supp\,}\Lambda$.
This implies in particular that $\nu(A)>0$. Similarly, we show that $\mu(B)>0$. 
 \end{proof}
 
\begin{Proposition}\label{lem:YM}
Let $E\subset \R^m\times \R^m$ be symmetric. 
 Then, 
\begin{align*}
\mathcal Y_E =
\bigcup_{P\in \mathcal P_{E}} \mathcal Y_P.
\end{align*}
\end{Proposition}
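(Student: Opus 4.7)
The plan is to establish the two inclusions in $\mathcal Y_E = \bigcup_{P\in\mathcal P_E}\mathcal Y_P$ separately. The containment $\supseteq$ is immediate: every $P\in\mathcal P_E$ is, by definition, a subset of $E$, so any $\Lambda\in\mathcal Y_P$ has $\operatorname{supp}\Lambda_{(x,y)}\subset P\subset E$ and therefore lies in $\mathcal Y_E$.

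For the substantial inclusion $\mathcal Y_E\subset\bigcup_P\mathcal Y_P$, fix $\Lambda\in\mathcal Y_E$ with $\Lambda_{(x,y)}=\nu_x\otimes\nu_y$. The strategy is to exhibit a closed set $A\subset\R^m$ satisfying both $\operatorname{supp}\nu_x\subset A$ for a.e.\ $x$ and $A\times A\subset E$. Once this is achieved, a routine application of Zorn's lemma (upper bounds for chains are given by unions, which preserve the Cartesian-in-$E$ property) produces a maximal $A^*\supset A$ with $A^*\times A^*\in\mathcal P_E$, and then $\operatorname{supp}\Lambda_{(x,y)}=\operatorname{supp}\nu_x\times\operatorname{supp}\nu_y\subset A\times A\subset A^*\times A^*$ for a.e.\ $(x,y)$ exhibits $\Lambda\in\mathcal Y_{A^*\times A^*}$. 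The natural candidate is $A:=\operatorname{supp}\nu^*$, where $\nu^*:=|\Omega|^{-1}\int_\Omega\nu_x\,dx\in\mathcal Pr(\R^m)$; the identity $0=\nu^*(A^c)=|\Omega|^{-1}\int_\Omega\nu_x(A^c)\,dx$ together with non-negativity yields $\nu_x(A^c)=0$, and hence $\operatorname{supp}\nu_x\subset A$, for a.e.\ $x$.

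The essential step is verifying $A\times A\subset E$. Given $(\xi,\alpha)\in A\times A$ and open neighborhoods $U\ni\xi$, $V\ni\alpha$, the definition of $A=\operatorname{supp}\nu^*$ forces $\{x:\nu_x(U)>0\}$ and $\{y:\nu_y(V)>0\}$ to have positive measure in $\Omega$. Combining this with Fubini applied to the essential inclusion $\operatorname{supp}\nu_x\times\operatorname{supp}\nu_y\subset E$ produces a pair $(x,y)$ simultaneously fulfilling all three conditions, and then selecting $\xi'\in\operatorname{supp}\nu_x\cap U$ and $\alpha'\in\operatorname{supp}\nu_y\cap V$ delivers a point $(\xi',\alpha')\in(U\times V)\cap E$. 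Lemma~\ref{lem:supp}, applied to the product measure $\nu_x\otimes\nu_y$ with the symmetry of $E$ invoked to swap coordinates, is then the tool that upgrades this density statement into genuine membership: it guarantees that whenever a symmetric pair of points lies in the support of such a product, the full Cartesian subsquare is contained in the support as well, so the approximating points $(\xi',\alpha')\in E$ come arranged in honest rectangles of $E$ rather than in an arbitrary cloud.

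The main obstacle I expect lies precisely in this last transition from the Fubini-density conclusion $(\xi,\alpha)\in\overline E$ to actual membership $(\xi,\alpha)\in E$: the topological density argument is clean but by itself only delivers a closure statement, and it is exactly the symmetric product-measure rigidity from Lemma~\ref{lem:supp}, coupled with the symmetry of $E$, that converts it into the Cartesian structure needed to conclude $A\times A\subset E$ and close the proof.
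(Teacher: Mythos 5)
Your proof takes a genuinely different route from the paper's. The paper passes through the diagonalization $\widehat E$, establishing $\mathcal Y_E=\mathcal Y_{\widehat E}$ and $\mathcal Y_{\widehat E}=\bigcup_{P\in\mathcal P_{\widehat E}}\mathcal Y_P$ by two separate contradiction arguments built on the support-rigidity Lemma~\ref{lem:supp}. You instead try to produce the relevant maximal Cartesian set directly: average to form $\nu^*=|\Omega|^{-1}\int_\Omega\nu_x\,dx$, set $A=\operatorname{supp}\nu^*$ so that $\operatorname{supp}\nu_x\subset A$ a.e., aim to show $A\times A\subset E$, and then pass to a maximal element via Zorn. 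The $\supseteq$ inclusion, the a.e.\ inclusion $\operatorname{supp}\nu_x\subset A$, the Zorn step, and the Fubini/density argument giving $(\xi,\alpha)\in \overline E$ are all fine.

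The gap is exactly where you flagged it, but your proposed repair does not work. Lemma~\ref{lem:supp} cannot upgrade $(\xi,\alpha)\in\overline E$ to $(\xi,\alpha)\in E$: it is a rigidity statement about the support of a fixed product measure $\nu\otimes\mu$, whose four-point hypothesis would in particular require $(\alpha',\xi')\in\operatorname{supp}\nu_x\times\operatorname{supp}\nu_y$ as well, which you do not have, and it says nothing about topological closure. Indeed for a general symmetric $E$ the identity fails: take $\Omega=(0,1)$, $E=(0,1)^2\setminus\{(t,t):t\in(0,1)\}$, and $\nu_x=\delta_x$; then $\operatorname{supp}\Lambda_{(x,y)}=\{(x,y)\}\subset E$ for a.e.\ $(x,y)$, so $\Lambda\in\mathcal Y_E$, yet the only Cartesian product contained in $E$ is the empty one, whence $\bigcup_{P\in\mathcal P_E}\mathcal Y_P=\emptyset$. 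The statement therefore needs $E$ closed, as it is in every application in the paper (compact $K$, or sublevel sets of a coercive lower semicontinuous integrand). With that extra hypothesis $\overline E=E$, and your construction closes cleanly without any appeal to Lemma~\ref{lem:supp}, giving in fact a shorter proof than the paper's two-step diagonalization argument. So: add closedness of $E$ to the hypotheses, conclude $A\times A\subset\overline E=E$, and delete the reference to Lemma~\ref{lem:supp}.
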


\begin{proof} We prove the first two identities of 
\begin{align}\label{E=Ehat2}
\mathcal Y_E =\mathcal Y_{\widehat E} =\bigcup_{P\in \mathcal P_{\widehat E}} \mathcal Y_P =\bigcup_{P\in \mathcal P_{E}} \mathcal Y_P.
\end{align}
in separate steps; the last one is immediate, since $\mathcal P_E=\mathcal P_{\widehat E}$ by~\eqref{PK=PKhat}.

 \textit{Step~1: Invariance under diagonalization.} Since $\widehat E\subset E$, we only need to prove that $\mathcal Y_{E} \subset \mathcal Y_{\widehat E}$. Let $\Lambda \in \mathcal Y_{E}$, 
and assume to the contrary that 
 there exists a measurable set $N\subset \Omega\times \Omega$ with positive $\mathcal L^{2n}$-measure such that
\begin{align*}
{\rm supp\,} \Lambda_{(x,y)} \cap E\setminus \widehat E \neq \emptyset 
\end{align*}
for all $(x,y)\in N$. Due to the symmetry of $E$ and $\widehat E$, we may take $N$ to be symmetric. 

Now fix $(x, y)\in N$ and let $(\xi, \zeta)\in {\rm supp}\, \Lambda_{(x,y)}$ with $(\xi, \zeta)\notin \widehat E$.  Then also $(\zeta, \xi)\in {\rm supp}\, \Lambda_{(x,y)}$, and we infer from Lemma~\ref{lem:supp} that
\begin{align*}
\{\xi, \zeta\}\times \{\xi, \zeta\} \subset {\rm supp}\, \Lambda_{(x,y)} \subset E. 
\end{align*}
Hence, $(\xi, \zeta)\in \widehat E$ according to Definition~\ref{def:diagonalization}, which is  a contradiction.

\textit{Step~2: Alternative representation of $\mathcal Y_{\widehat E}$.}  By definition, any $P\in \mathcal P_{\widehat E}$ is contained in $\widehat E$; hence, $\bigcup_{P\in\mathcal P_{\widehat E}}\mathcal Y_P\subset \mathcal Y_{\widehat E}$ is immediate. 
For the reverse inclusion, let $\Lambda\in \mathcal Y_{\widehat E}$. To show that $\Lambda\in \mathcal Y_P$ for some $P\in \mathcal P_{\widehat E}$,
we argue again by contradiction,  assuming that there is a measurable set $N\subset \Omega\times \Omega$ with $\mathcal L^{2n}(N)>0$, as well as a maximal Cartesian set $\overline{P}\in \mathcal P_{\widehat E}$ such that for all $(x,y)\in N$, 
\begin{align*}
{\rm supp\,} \Lambda_{(x,y)} \subset \widehat E,
\end{align*}
as well as 
\begin{align*}
{\rm supp\,} \Lambda_{(x,y)} \cap \overline Q\neq \emptyset \quad \text{and}\quad {\rm supp\,} \Lambda_{(x,y)} \cap \underline{Q} 
\neq \emptyset,
\end{align*}
with
\begin{align}\label{Qbar}
\overline Q:= \overline{P}\setminus \bigcup_{P\in \mathcal P_{\widehat E}, P\neq \overline{P}}P 
\quad \text{and}\quad
\underline Q:= \bigcup_{P\in \mathcal P_{\widehat E}, P\neq \overline{P}}P \setminus \overline{P}.
\end{align} 
Since $\overline Q$ and $\underline Q$ are both symmetric, $N$ can be chosen to be symmetric, too.

Next, we fix $(x,y)\in N$ and take $(\xi, \zeta), (\alpha, \beta)\in {\rm supp\,} \Lambda_{(x,y)}$ such that 
\begin{align}\label{inclusionPPbar}
(\xi, \zeta)\in \overline Q \quad \text{and } (\alpha, \beta)\in \underline Q.
\end{align}
By symmetry, also $(\zeta, \xi), (\beta, \alpha)\in {\rm supp\,} \Lambda_{(x,y)}$, 
and we infer from Lemma~\ref{lem:supp} that
\begin{align*}
M:=\{\xi, \zeta, \alpha, \beta\}\times \{\xi, \zeta, \alpha, \beta\} \subset {\rm supp}\, \Lambda_{(x,y)} \subset \widehat E. 
\end{align*} 
Since $M$ is a Cartesian product, it is contained in some maximal Cartesian subset $P$ of $\widehat E$. However, in view of~\eqref{inclusionPPbar} and~\eqref{Qbar}, $P$ cannot coincide with any element of $\mathcal P_{\widehat E}$; indeed, $(\xi, \zeta)\in P$, but $(\xi, \zeta)$ does not lie in any maximal Cartesian subset of $\widehat E$ other than $\overline P$, hence, $P=\overline P$; on the other hand, $(\alpha, \beta)\in P$, but $(\alpha, \beta)\notin \overline P$, which shows that $P\neq\overline P$. This is the sought contradiction. 
\end{proof}

\begin{proof}[Proof of Theorem~\ref{prop:approx_inclusion}]
The equality of $\mathcal A_K^\infty$ and $\mathcal B_K^\infty$ follows from Theorem~\ref{theo:Young} when thinking in terms of barycenters of Young measures. Indeed, it suffices to use~\eqref{relation_YM} along with the observation that 
 \begin{align*}
 \mathcal A_{K}^\infty =  \{u\in L^\infty(\Omega;\R^m): v_u=[\Lambda], \ \Lambda\in \mathcal Y_{K}^\infty\} 
 \ \text{and}\  \mathcal \mathcal B_{K}^\infty = \{u\in L^\infty(\Omega;\R^m): v_u=[\Lambda], \ \Lambda\in \widetilde{\mathcal Y}_{K}^\infty\}.
\end{align*}
For the desired representation formula, we invoke again~\eqref{relation_YM} to deduce that
\begin{align*}
\mathcal A_K^\infty & = \mathcal B_K^\infty  = \Big\{u\in L^\infty(\Omega;\R^m): v_u=[\Lambda], \Lambda\in \bigcup_{P\in \mathcal P_K} \mathcal Y_P\Big\} \\ & = 
\bigcup_{A\times A\in \mathcal P_K} \{u\in L^\infty(\Omega;\R^m): v_u=[\Lambda], \Lambda\in \mathcal Y_{A\times A}\} \\ &  =   \bigcup_{A\times A\in \mathcal P_K} \{u\in L^\infty(\Omega;\R^m): u=[\nu], \nu\in L^\infty_w(\Omega; \mathcal Pr(\R^m)),\, { \rm supp\,} \nu_x\subset A \text{ for a.e.~$x\in \Omega$}\}  \\ &= \bigcup_{A\times A\in \mathcal P_K} \{u\in L^\infty(\Omega;\R^m): u\in A^{\rm co} \text{ a.e.~in $\Omega$}\},
\end{align*}
which was the claim. In the last step, we used the well-known characterization of convex hulls via barycenters of probability measures, see e.g.~\cite{Dol03}. 
\end{proof}

\begin{Remark}\label{rem:BKinftyp}
Let $p\geq 1$ and $E\subset \R^m\times \R^m$ symmetric. Replacing the weakly$^\ast$ converging $L^\infty$-sequences in the above definitions of $\widetilde{\mathcal Y}_E^\infty$ and $\mathcal B_E^\infty$ (see~\eqref{YtildeE} and~\eqref{BcalK_infty}) by weakly converging $L^p$-sequences results in new sets of functions and parametrized measures, which we want to call $\widetilde{\mathcal Y}_{E, p}^\infty$ and $\mathcal B_{E, p}^\infty$, respectively. 

If $K\subset \R^m\times \R^m$ is symmetric and compact, then 
\begin{align*}
\mathcal B_{K,p}^\infty = \mathcal B_K^\infty \qquad \text{ and }\qquad \widetilde{\mathcal Y}_{K,p}^\infty =\widetilde{\mathcal Y}_K^\infty,
\end{align*}
which is a consequence of~~\cite[Proposition~2.2]{Ped97} and the fundamental theorem on Young measures, see  e.g.~\cite[Theorem~8.6\,(iii)]{FoL07}. 
\end{Remark}

\section{Necessary conditions for relaxation via separate convexification}\label{sec:nesscon}
As pointed out in the introduction, each of the papers~\cite{BMC18, BeP06} presents a specific example of a double-integral functional of multi-well form for which separate convexification of the integrand fails in providing a correct relaxation formula.
Here, we generalize these findings and generate a whole class of such examples (see Corollary~\ref{cor:neq}), motivated by recent insights from the study of nonlocal supremal functionals and nonlocal inclusions~\cite{KrZ19}. A key ingredient is the following notion of diagonalization for functions introduced in \cite[(7.1)]{KrZ19}.

\begin{Definition}\label{def:diagonalization2}
The diagonalization of a symmetric function $W:\R^m\times \R^m\to \R$ is defined as 
\begin{align*}
\widehat{W}:\R^m\times \R^m\to \R, \quad \widehat{W}(\xi, \zeta) = \inf\{c\in \R: (\xi, \zeta)\in \widehat{L_c(W)}\},
\end{align*}
where $\widehat{L_c(W)}=L_c(W)^\wedge$ is the diagonalization of the sublevel set $L_c(W)$ with $c\in \R$ in the sense of Definition~\ref{def:diagonalization}.
\end{Definition}

Notice that $\widehat W\geq W$ and that 
\begin{align}\label{identity_LcW}
L_c(\widehat{W}) = \widehat{L_c(W)} = L_c(W)^\wedge
\end{align}
for all $c\in \R$, cf.~\cite[(7.2)]{KrZ19}. 


\subsection{Double integrals and order relations}\label{subsec:order} 
An important difference between the theory of single- and double-integral functionals with substantial conceptual and technical ramifications lies in the order relations for the functionals and their integrands. 

Whereas it holds for any suitable $f:\R^m\to \R$ that
\begin{align*}
\inf_{u\in L^p(\Omega;\R^m)} \int_{\Omega} f(u)\, dx \geq 0\quad \Rightarrow\quad  f\geq 0,
\end{align*}
the analogy of this implication is in general not true in the context of double integrals. In fact, if 
\begin{align*}
\inf_{u\in L^p(\Omega;\R^m)} \int_\Omega\int_\Omega W(u(x), u(y))\, dx\,dy \geq 0,
\end{align*} 
for a suitable $W:\R^m\times \R^m\to\R$, the integrand $W$ may take both positive and negative values, which is owed to nonlocal effects. 
This observation was pointed out first by Bellido \& Mora-Corral in~\cite[Section 7]{BMC18} and illustrated with an explicit scalar example of the form $W(\xi, \zeta)= w(\xi-\zeta)$ for $(\xi, \zeta)\in \R\times \R$, where $w:\R\to \R$ is a fourth-order even polynomial; see~\cite[Example~7.2]{BMC18} for the details. In the next proposition, we investigate a 
more general class of related integrands, cf.~also Remark~\ref{rem:1}\,c) below.
\begin{Proposition}\label{prop:geq}  Let $W:\R^m\times \R^m\to \R$ be a symmetric, lower semicontinuous function with $p$-growth and $p$-coercivity. If 
\begin{align}\label{Wmin}
\min W 
<
 \min \widehat W,
\end{align} then, 
\begin{align*}
|\Omega|^2 \min \widehat W  \geq \inf_{u\in L^p(\Omega;\R^m)} I_W(u) > |\Omega|^2\min W.
\end{align*}
\end{Proposition}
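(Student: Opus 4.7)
The statement splits into an upper bound $\inf_u I_W(u) \leq |\Omega|^2\min\widehat W$, which does not use the hypothesis, and a strict lower bound $\inf_u I_W(u) > |\Omega|^2\min W$, which does. For the upper bound, I would first unfold Definition~\ref{def:diagonalization2} to record the pointwise formula
\begin{align*}
\widehat W(\xi,\zeta) = \max\bigl\{W(\xi,\zeta),\, W(\xi,\xi),\, W(\zeta,\zeta)\bigr\},
\end{align*}
so that $\widehat W$ inherits lower semicontinuity and $p$-coercivity from $W$ and therefore attains its minimum at some $(\xi_0,\zeta_0)\in\R^m\times\R^m$. Testing $I_W$ against the two-valued simple function $u=\xi_0\mathbbm{1}_{\Omega_0}+\zeta_0\mathbbm{1}_{\Omega\setminus\Omega_0}$ with $|\Omega_0|=|\Omega|/2$ then yields
\begin{align*}
I_W(u) = \frac{|\Omega|^2}{4}\bigl[W(\xi_0,\xi_0)+2W(\xi_0,\zeta_0)+W(\zeta_0,\zeta_0)\bigr] \leq |\Omega|^2\, \min\widehat W,
\end{align*}
since each of the three values is at most $\min\widehat W$.

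For the strict lower bound I argue by contradiction: assume $\inf_u I_W(u) = |\Omega|^2\min W$ and pick a minimizing sequence $(u_j)_j\subset L^p(\Omega;\R^m)$, necessarily bounded in $L^p$ by $p$-coercivity. Passing to a subsequence, the nonlocal field $v_{u_j}$ generates a Young measure $\Lambda$ on $\Omega\times\Omega$; the tensorial structure of $v_{u_j}=(u_j(x),u_j(y))$ forces $\Lambda_{(x,y)}=\nu_x\otimes\nu_y$ for the Young measure $\nu$ generated by $(u_j)_j$ (standard, verifiable by testing on separable integrands). Young-measure lower semicontinuity applied to the non-negative integrand $W-\min W$ then yields
\begin{align*}
|\Omega|^2\min W = \lim_{j\to\infty} I_W(u_j) \geq \int_\Omega\int_\Omega \langle \Lambda_{(x,y)}, W\rangle\,dx\,dy \geq |\Omega|^2\min W,
\end{align*}
so equality holds throughout and $\mathrm{supp}\,\Lambda_{(x,y)}\subset K:=\{W=\min W\}$ for a.e.~$(x,y)$. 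The set $K$ is symmetric and, by coercivity and lower semicontinuity of $W$, compact, so $\Lambda\in\mathcal Y_K$.

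At this point Proposition~\ref{lem:YM} takes over and produces a maximal Cartesian subset $A\times A\in\mathcal P_K$ with $\mathrm{supp}\,\Lambda_{(x,y)}\subset A\times A$ almost everywhere. Since probability measures have non-empty support, $A\neq\emptyset$; any choice of $\xi\in A$ gives $(\xi,\xi)\in A\times A\subset K$, so $W(\xi,\xi)=\min W$, whence the pointwise formula for $\widehat W$ forces $\widehat W(\xi,\xi)=W(\xi,\xi)=\min W$, contradicting $\min W<\min\widehat W$. The \emph{main obstacle} is exactly this last passage, namely upgrading ``$W$ attains its minimum on the support of a Cartesian Young measure'' into ``$W$ attains its minimum on the diagonal''; Proposition~\ref{lem:YM}, which exploits Lemma~\ref{lem:supp} to confine tensor supports into maximal Cartesian squares $A\times A\subset K$, is what drives this step and is the genuine heart of the argument.
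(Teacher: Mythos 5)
Your proposal is correct and follows essentially the same strategy as the paper's proof: normalize (implicitly) via $\min W$, use the explicit description of $\widehat W$ to test the upper bound with a simple function, then argue by contradiction, extract a Young measure $\Lambda=\nu\otimes\nu$ from the nonlocal fields of a minimizing sequence, and invoke Proposition~\ref{lem:YM} to confine $\operatorname{supp}\Lambda$ to a maximal Cartesian square inside $L_{\min W}(W)$, contradicting~\eqref{Wmin}. The small cosmetic differences---using a two-valued test function rather than the constant $\xi_0$ for the upper bound, recording the pointwise identity $\widehat W(\xi,\zeta)=\max\{W(\xi,\zeta),W(\xi,\xi),W(\zeta,\zeta)\}$ up front, and deriving the contradiction by exhibiting a diagonal minimizer rather than observing $\mathcal Y_{L_0(W)}=\mathcal Y_{\widehat{L_0(W)}}=\emptyset$ directly---do not alter the mechanism; both arguments hinge on the same decomposition from Proposition~\ref{lem:YM}.
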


\begin{Remark}\label{rem:1}
a) It is clear that $W$ attains its infimum on $\R^m\times \R^m$ 
due to its coercivity and lower semicontinuity of $W$. Since these two properties carry over to $\widehat{W}$ considering that the sublevel sets of $\widehat W$ are again compact (cf.~\eqref{identity_LcW} and at the end of Section~\ref{subsec:diagonalization}),  also $\min \widehat{W}$ is well defined. 

b) Due to Proposition~\ref{prop:geq} and the properties of $\widehat W$, one finds that $\min W=\min \widehat W$ if and only if 
\begin{align*}
\inf_{u\in L^p(\Omega;\R^m)} I_W(u) = \min_{u\in L^p(\Omega;\R^m)} I_W(u) = |\Omega|^2 \min W.
\end{align*}

c) The statement of Proposition~\ref{prop:geq} is valid also for double integrands $W:\R^m\times \R^m\to \R$ of the form $W(\xi, \zeta)=w(\xi-\zeta)$ for $(\xi, \zeta)\in \R^m\times \R^m$, where $w:\R^m\to \R$ is a lower semicontinuous function with $p$-growth and $p$-coercivity, provided we consider $I_W$ only on the smaller space of $L^p(\Omega;\R^m)$-functions with vanishing mean value. 
\end{Remark}

\begin{proof}[Proof of Proposition \ref{prop:geq}]
For simplicity of notation, we write 
 $\inf I_W:=\inf_{u\in L^p(\Omega;\R^m)}I_W(u)$ in what follows, 
and we assume without loss of generality that $\min W=0$; otherwise, $W$ can be translated suitably. 

First, we show the estimate 
\begin{align}\label{est111}
|\Omega|^2  \min \widehat W\geq \inf I_W.
\end{align} 
Let $(\xi, \zeta)\in \R^m\times \R^m$ be a minimizer of $\widehat W$. Then, $(\xi, \zeta)\in L_{\min \widehat W} (\widehat W) = L_{\min \widehat W} (W)^\wedge$ by~\eqref{identity_LcW}, so that
\begin{align}\label{identity}
\widehat W(\xi, \zeta) = \widehat W (\zeta, \xi) = \widehat W(\xi, \xi) = \widehat W(\zeta, \zeta) = \min \widehat W,
\end{align}
by the symmetry of $\widehat{W}$ and the definition of diagonalization of sets in~\eqref{Khat}. 
Considering the constant function $v:\Omega\to \R^m$ given by $v(x)=\xi$ for $x\in \Omega$, we conclude in view of~\eqref{identity} and $W\leq \widehat W$ that
$$
\inf I_W \leq I_W(v) =  |\Omega|^2 W(\xi, \xi) \leq |\Omega |^2\widehat W(\xi,\xi)=|\Omega|^2{\rm min}\widehat W.
$$
This implies~\eqref{est111}. 

To prove the strict inequality $\inf I_W>  \min W=0$, we assume to the contrary that $\inf I_W=0$, meaning that there exists a sequence $(u_j)_j\subset L^p(\Omega;\R^m)$ such that 
\begin{align}\label{est908}
\lim_{j\to \infty} I_W(u_j)= \lim_{j\to \infty} \int_{\Omega}\int_\Omega W(v_{u_j}(x,y))\, dx\, dy = 0,
\end{align}
cf.~\eqref{vw}. 
As a consequence of the $p$-coercivity of $W$, $(v_{u_j})_j$ is uniformly bounded in $L^p(\Omega\times \Omega;\R^m\times \R^m)$. 

If $\Lambda=\{\Lambda_{(x,y)}\}_{(x,y)} =\{\nu_x\otimes \nu_y\}_{(x,y)}$ with $\nu\in L_w^\infty(\Omega; \mathcal Pr(\R^m))$ is the Young measure generated by a (non-relabeled) subsequence of $(v_{u_j})_j\subset L^p(\Omega\times \Omega;\R^m\times \R^m)$ 
according to \cite[Proposition~2.3]{Ped97}, the fundamental theorem on Young measures (see e.g.~\cite[Theorem~8.6\,(i)]{FoL07}) yields that 
\begin{align*}
\lim_{j\to \infty} \int_\Omega\int_\Omega W (v_{u_j}(x,y)) \, dx\, dy \geq \int_\Omega\int_\Omega \langle \Lambda_{(x,y)}, W\rangle \, dx\, dy, 
\end{align*}
where $\langle\Lambda_{(x,y)}, W\rangle := \int_{\R^m}\int_{\R^m}  W(\xi, \zeta)\, d\Lambda_{(x,y)}(\xi, \zeta)$.

In light of~\eqref{est908} and the non-negativity of $W$, 
it follows that $\langle \Lambda_{(x,y)}, W\rangle = 0$ for a.e.~$(x,y)\in \Omega\times \Omega$, and hence, 
${\rm supp\,}\Lambda_{(x,y)}\subset L_0(W)$ for a.e.~$(x,y)\in \Omega\times \Omega$,
or equivalently by~\eqref{YK}, 
\begin{align}\label{Lambdain}
\Lambda\in \mathcal Y_{L_0(W)}.
\end{align}
 
On the other hand, \eqref{E=Ehat2} in the proof of Lemma~\ref{lem:YM} together with \eqref{Wmin} results in
\begin{align}\label{emptyset}
 \mathcal Y_{L_0(W)} = \mathcal Y_{\widehat{L_0(W)}} = \emptyset. 
\end{align}

Combining~\eqref{Lambdain} with~\eqref{emptyset} produces the desired contradiction. 
\end{proof}

We continue our discussion of order relations for double integrals with the following basic, yet useful, observation.

\begin{Lemma}\label{lem:FG}
Let $V, W:\R^m\times \R^m\to \R$ be symmetric, lower semicontinuous integrands with $p$-growth such that $I_V\leq I_W$. 
Then $V(\xi, \xi)\leq W(\xi, \xi)$ for all $\xi\in \R^m$. 

Moreover, if $V(\xi, \xi)=W(\xi, \xi)$ for all $\xi\in A\subset \R^m$, 
then $V\leq W$ on  $A\times A$.
\end{Lemma}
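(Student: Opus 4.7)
The plan is to test the functional inequality $I_V \leq I_W$ against simple, low-complexity functions that let us read off pointwise comparisons of the integrands. For the first claim, I will plug in constant functions. Given $\xi \in \R^m$, the choice $u \equiv \xi$ satisfies $I_V(u) = |\Omega|^2 V(\xi,\xi)$ and $I_W(u) = |\Omega|^2 W(\xi,\xi)$, so the hypothesis immediately yields $V(\xi,\xi) \leq W(\xi,\xi)$ after dividing by $|\Omega|^2 > 0$.

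For the second claim, I probe the inequality with two-level piecewise-constant functions whose values lie in the set $A$ where $V$ and $W$ are assumed to agree on the diagonal. Fix $\xi, \zeta \in A$ and choose disjoint measurable subsets $\Omega_1, \Omega_2 \subset \Omega$ of positive measure with $\Omega = \Omega_1 \cup \Omega_2$; this is possible since $\Omega$ is open and nonempty. Set $u = \xi \mathbbm{1}_{\Omega_1} + \zeta \mathbbm{1}_{\Omega_2}$. Splitting the double integral over the four product regions $\Omega_i \times \Omega_j$ and using the symmetry of $V$ gives
\[
I_V(u) = |\Omega_1|^2 V(\xi,\xi) + |\Omega_2|^2 V(\zeta,\zeta) + 2|\Omega_1|\,|\Omega_2|\, V(\xi,\zeta),
\]
and the analogous formula holds for $I_W(u)$. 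Because $\xi, \zeta \in A$, the diagonal contributions satisfy $V(\xi,\xi) = W(\xi,\xi)$ and $V(\zeta,\zeta) = W(\zeta,\zeta)$, so these terms cancel when subtracting $I_V(u) \leq I_W(u)$, leaving
\[
2|\Omega_1|\,|\Omega_2|\bigl(V(\xi,\zeta) - W(\xi,\zeta)\bigr) \leq 0.
\]
Since $|\Omega_1|, |\Omega_2| > 0$, this yields $V(\xi,\zeta) \leq W(\xi,\zeta)$, and as $\xi, \zeta \in A$ were arbitrary, $V \leq W$ on $A \times A$.

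No substantial obstacle is anticipated, since the argument is just the correct bookkeeping of a direct computation; the only points to be careful about are the factor of $2$ arising from symmetry of the cross terms, and ensuring strictly positive measures for $\Omega_1, \Omega_2$ so that division is legitimate.
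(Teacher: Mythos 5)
Your proof is correct and follows essentially the same approach as the paper's: constant test functions for the first claim, two-valued piecewise-constant test functions with the diagonal terms cancelling for the second. The only cosmetic difference is that the paper fixes the two subdomains to have equal measure $\tfrac{1}{2}|\Omega|$, whereas you allow arbitrary positive-measure splits; both are fine since the diagonal contributions cancel exactly in either case.
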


\begin{proof}
Trivially, the first statement follows by evaluating $I_V$ and $I_W$ for constant functions.
To show the second statement, let
$(\xi, \zeta)\in A \times A$  and consider a piecewise constant function $v=\xi \mathbbm{1}_{\Omega_\xi} + \zeta \mathbbm{1}_{\Omega\setminus \Omega_\xi}\in S^\infty(\Omega;\R^m)$ with a measurable set $\Omega_\xi \subset \Omega$ such that $|\Omega_{\xi}|  =\frac{1}{2}|\Omega|$. Then, 
\begin{align*}
 &\frac{|\Omega|^2}{4} V(\xi, \xi) + \frac{|\Omega|^2}{4}V(\zeta,\zeta) + \frac{|\Omega|^2}{2} V(\xi, \zeta)= I_V(v)  \\ & \qquad\qquad   \leq I_W(v) = \frac{|\Omega|^2}{4} W(\xi, \xi) + \frac{|\Omega|^2}{4}W(\zeta,\zeta) + \frac{|\Omega|^2}{2} W(\xi, \zeta).
\end{align*}
Since $V$ and $W$ coincide on the diagonal elements in $A\times A$, this implies $V(\xi,\zeta)\leq W(\xi, \zeta)$, concluding the proof.
\end{proof}

\begin{Remark}
	\label{Elbauappl}
	The previous lemma shows in particular that a double integral $I_W$ as in~\eqref{I_W2} determines its integrand $W$ uniquely. We point out  that this is in contrast to the supremal setting, where according to~\cite[(7.3)]{KrZ19}, all supremands with the same diagonalization in the sense of Definition~\ref{def:diagonalization2} generate the same supremal functional. 
\end{Remark}

With the help of the previous lemma, we can derive the following bounds for a class of relaxed double integrands.

\begin{Proposition}
	\label{intrep}
	Let $W, G:\mathbb R^m\times \mathbb R^m\to \mathbb R$ be symmetric, lower semicontinuous functions with $p$-growth. Suppose that $I_W^{\rm rlx}=I_G$ and that there exists $A\subset \R^m$ such that $W(\xi,\xi)=W^{\rm sc}(\xi,\xi)$ for every $\xi \in A^c$. 
	Then, 
	\begin{align}\label{est80}
	W^{\rm sc}\leq G\leq W \quad \text{on $(A\times A)^c$.}
	\end{align} 
	If $A=\emptyset$, it holds that $G=W^{\rm sc}$.
	\end{Proposition}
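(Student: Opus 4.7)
The plan is to combine the standard characterization of $L^p$-weakly lower semicontinuous double integrals (separate convexity of the integrand) with a mild sharpening of the comparison device from Lemma \ref{lem:FG}.

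\textit{First step:} I would sandwich $I_G$ between $I_{W^{\rm sc}}$ and $I_W$. Since $I_G=I_W^{\rm rlx}$ is $L^p$-weakly lower semicontinuous, $G$ is separately convex by \cite{BeP06,Mun09}; the same criterion applied to $W^{\rm sc}$ shows that $I_{W^{\rm sc}}$ is weakly lower semicontinuous as well, so $W^{\rm sc}\leq W$ together with the maximality of the relaxation yields
\begin{align*}
I_{W^{\rm sc}}\leq I_W^{\rm rlx}=I_G\leq I_W\quad\text{on }L^p(\Omega;\R^m).
\end{align*}
Evaluating on constant maps and using $W(\xi,\xi)=W^{\rm sc}(\xi,\xi)$ for $\xi\in A^c$ collapses these functional inequalities to the pointwise diagonal identity
\begin{align*}
W^{\rm sc}(\xi,\xi)=G(\xi,\xi)=W(\xi,\xi)\qquad\text{for every }\xi\in A^c.
\end{align*}

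\textit{Second step:} I would upgrade this to off-diagonal bounds on $(A\times A)^c$. By the symmetry of the three integrands, it suffices to treat $(\xi,\zeta)$ with $\xi\in A^c$ and $\zeta\in\R^m$ arbitrary. In the spirit of Lemma \ref{lem:FG}, I test $I_G\leq I_W$ against the two-valued function $v_t=\xi\mathbbm{1}_{\Omega_t}+\zeta\mathbbm{1}_{\Omega\setminus\Omega_t}$ with $|\Omega_t|=t|\Omega|$, which produces the quadratic inequality
\begin{align*}
t^2[G-W](\xi,\xi)+(1-t)^2[G-W](\zeta,\zeta)+2t(1-t)[G-W](\xi,\zeta)\leq 0.
\end{align*}
The first bracket vanishes by the first step; dividing by $(1-t)$ and sending $t\to 1^-$ eliminates the $(\zeta,\zeta)$-contribution and leaves $G(\xi,\zeta)\leq W(\xi,\zeta)$. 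Running the same computation with $I_{W^{\rm sc}}\leq I_G$ in place of $I_G\leq I_W$ delivers $W^{\rm sc}(\xi,\zeta)\leq G(\xi,\zeta)$.

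\textit{Third step (the case $A=\emptyset$):} here the second step already yields $W^{\rm sc}\leq G\leq W$ on all of $\R^m\times\R^m$. Since $G$ is separately convex and $W^{\rm sc}$ is by definition the \emph{largest} separately convex minorant of $W$, the inequality $G\leq W$ upgrades to $G\leq W^{\rm sc}$, which combined with the reverse bound forces $G=W^{\rm sc}$.

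\textit{Main obstacle:} the only delicate point is the sharpening of Lemma \ref{lem:FG} carried out in the second step. Its statement as given in the paper requires diagonal equality at both endpoints $\xi$ and $\zeta$, whereas here only one endpoint need lie in $A^c$. The asymmetric limit $t\to 1^-$ is precisely what extracts an off-diagonal inequality from one-sided diagonal equality, and it is at this juncture that one must verify, using $p$-growth of $W$ and $G$, that the residual $(1-t)$-factor really annihilates the $(\zeta,\zeta)$-term.
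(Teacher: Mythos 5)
Your proof is correct and follows essentially the same route as the paper: sandwich $I_{W^{\rm sc}}\leq I_G\leq I_W$, test against two-valued simple functions with unbalanced weight $\lambda=t\to 1^-$, and use the one-sided diagonal equality at $\xi\in A^c$ to extract the off-diagonal bound. The final worry you raise is unfounded: since $W$ and $G$ are real-valued, $[G-W](\zeta,\zeta)$ is a fixed finite number, so the $(1-t)$-factor annihilates it without any appeal to $p$-growth.
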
	
	
\begin{proof}[Proof] 
From
	\begin{align*}
	 I_{W^{\rm sc}}\leq I^{\rm rlx}_W=I_G\leq I_W,
	\end{align*}
we conclude with Lemma~\ref{lem:FG} that
$W^{\rm sc}(\xi, \xi)\leq G(\xi, \xi)\leq W(\xi, \xi)$ for all $\xi\in A^c$, 
which, in view of our hypothesis, yields
\begin{align}\label{G=Wdiagonal1}
W^{\rm sc}(\xi, \xi)=G(\xi, \xi) = W(\xi, \xi)\quad \text{ for $\xi\in A^c$. }
\end{align}

Let $(\xi, \zeta)\in (A\times A)^c$, and assume without loss of generality that $\xi \not \in A$; otherwise interchange the roles of $\xi$ and $\zeta$. Moreover, suppose for simplicity that $|\Omega|=1$. 
We define 
\begin{align*}
v=\xi \mathbbm{1}_{\Omega_\xi} + \zeta \mathbbm{1}_{\Omega\setminus \Omega_\xi}\in S^\infty(\Omega;\R^m), 
\end{align*}
where $\Omega_\xi\subset \Omega$ is measurable such that $|\Omega_{\xi}| =\lambda$ with $\lambda\in (0, 1)$. Then,
\begin{align*}
& \lambda^2 G(\xi, \xi) + (1-\lambda)^2 G(\zeta,\zeta) + 2\lambda(1-\lambda) G(\xi, \zeta)= I_G(v)\\ & \qquad \leq I_W(v) =\lambda^2 W(\xi, \xi) + (1-\lambda)^2W(\zeta,\zeta) + 2\lambda(1-\lambda) W(\xi, \zeta).
\end{align*}
Due to \eqref{G=Wdiagonal1}, this can be rewritten as
\begin{align*}
G(\xi, \zeta)\leq  \frac{1-\lambda}{2\lambda}\bigl(W(\zeta,\zeta) - G(\zeta, \zeta)\bigr) + W(\xi, \zeta).
\end{align*}
Letting $\lambda$ tend to $1$, allows us to conclude that $G\leq W$ on the complement of $A\times A$. 

If we replace $G$ in the argument above with $W^{\rm sc}$, and $W$ with $G$, the exact same reasoning provides that $W^{\rm sc}\leq G$ outside of $A\times A$. Overall, this proves~\eqref{est80}.

For the statement on the special case $A=\emptyset$, we infer from~\eqref{est80} that $W^{\rm sc}\leq G\leq W$. Since $G$ has to be separately convex due to the $L^p$-weak lower semicontinuity of $I_G$ (see e.g.~\cite[Theorem~1.1]{BeP06}), it follows that even $G\leq W^{\rm sc}$, which entails the claim. 
	\end{proof}

\subsection{Implications for relaxation formulas}
Based on the results of Section~\ref{subsec:order}, 
we derive necessary conditions for the relaxation of $I_W$ as in~\eqref{I_W2} via separate convexification of the double integrand $W$. We distinguish between the two cases when $\min W\neq\min \widehat W$ and $\min W= \min \widehat W$, addressed in Corollary~\ref{cor:neq}
 and Corollary~\ref{prop:ness}, respectively. 
 
\begin{Corollary}\label{cor:neq}
Let $W:\R^m\times \R^m\to \R$ be as in Proposition~\ref{prop:geq} with $\min \widehat W>\min W$.
If $I_W^{\rm rlx} = I_{W^{\rm sc}}$, then 
\begin{align}\label{con12}
\min \widehat{W^{\rm sc}} > \min W.
\end{align}
\end{Corollary}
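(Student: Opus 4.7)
The plan is to deduce the claim from Proposition~\ref{prop:geq} (and its consequence Remark~\ref{rem:1}\,b) applied to both $W$ and $W^{\rm sc}$, using that relaxation preserves infima.

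First, I would note the elementary identity $\min W^{\rm sc} = \min W$: the inequality $W^{\rm sc}\leq W$ gives one direction, while the constant function equal to $\min W$ is separately convex and lies below $W$, so $W^{\rm sc}\geq \min W$, giving the reverse. Next, since relaxation preserves infima, the assumption $I_W^{\rm rlx}=I_{W^{\rm sc}}$ yields
\begin{align*}
\inf_{u\in L^p(\Omega;\R^m)} I_{W^{\rm sc}}(u) = \inf_{u\in L^p(\Omega;\R^m)} I_W^{\rm rlx}(u) = \inf_{u\in L^p(\Omega;\R^m)} I_W(u).
\end{align*}

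Now I would invoke Proposition~\ref{prop:geq} applied to $W$: since by hypothesis $\min \widehat{W}>\min W$, the proposition gives the strict inequality
\begin{align*}
\inf_{u\in L^p(\Omega;\R^m)} I_W(u) > |\Omega|^2 \min W.
\end{align*}
Combining with the previous display and $\min W=\min W^{\rm sc}$, this produces
\begin{align*}
\inf_{u\in L^p(\Omega;\R^m)} I_{W^{\rm sc}}(u) > |\Omega|^2 \min W^{\rm sc}.
\end{align*}

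Finally, I would apply Remark~\ref{rem:1}\,b) (the contrapositive form of Proposition~\ref{prop:geq}) to $W^{\rm sc}$: this requires checking that $W^{\rm sc}$ meets the hypotheses of Proposition~\ref{prop:geq}, which is the only mildly technical point. Symmetry of $W^{\rm sc}$ follows from symmetry of $W$; the $p$-growth bound passes to $W^{\rm sc}$ since $W^{\rm sc}\leq W$; $p$-coercivity is inherited because the coercive minorant $(\xi,\zeta)\mapsto c(|\xi|^p+|\zeta|^p)-C$ is itself separately convex, hence lies below $W^{\rm sc}$; and lower semicontinuity (in fact continuity) of $W^{\rm sc}$ on $\R^m\times\R^m$ follows from separate convexity in finite dimensions. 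Hence Remark~\ref{rem:1}\,b) forces $\min W^{\rm sc}\neq \min \widehat{W^{\rm sc}}$, and since always $\widehat{W^{\rm sc}}\geq W^{\rm sc}$, we conclude
\begin{align*}
\min \widehat{W^{\rm sc}} > \min W^{\rm sc} = \min W,
\end{align*}
which is \eqref{con12}. The only genuine obstacle is the routine verification that $W^{\rm sc}$ inherits the standing assumptions on $W$ required to reapply Proposition~\ref{prop:geq}; everything else is bookkeeping.
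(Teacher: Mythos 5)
Your argument is correct and uses the same two ingredients as the paper (Proposition~\ref{prop:geq} and Remark~\ref{rem:1}\,b), combined with the fact that relaxation preserves infima). The only small difference is organizational: the paper applies Remark~\ref{rem:1}\,b) directly to $\widehat{W^{\rm sc}}$ to read off $\inf I_{\widehat{W^{\rm sc}}} = |\Omega|^2 \min \widehat{W^{\rm sc}}$ and then compares $\min I_{W^{\rm sc}} \leq \inf I_{\widehat{W^{\rm sc}}}$, whereas you apply the contrapositive of Remark~\ref{rem:1}\,b) to $W^{\rm sc}$ itself (after inserting the elementary identity $\min W^{\rm sc} = \min W$) to conclude $\min W^{\rm sc} \neq \min \widehat{W^{\rm sc}}$; both routes amount to the same inequality chain, and your explicit verification that $W^{\rm sc}$ inherits symmetry, $p$-growth, $p$-coercivity and continuity is a correct and welcome piece of bookkeeping.
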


\begin{proof}
	 By the assumptions and standard results in relaxation theory (see e.g.~\cite[Section 3]{DMbook}), one obtains that $$\inf_{u\in L^p(\Omega;\R^m)}I_W(u)
	=\min_{u\in L^p(\Omega;\R^m)}I^{\rm rlx}_W(u)= \min_{L^p(\Omega;\R^m)}I_{W^{\rm sc}}(u).$$
	Thus, applying Proposition~\ref{prop:geq} to $I_W$ and Remark~\ref{rem:1}\,b) with $\widehat{W^{\rm sc}}$ results in
$$
|\Omega|^2\min W< \inf_{u\in L^p(\Omega;\R^m)}I_W(u)=\min_{u\in L^p(\Omega;\R^m)}I_{W^{\rm sc}}(u) \leq\inf_{u\in L^p(\Omega;\R^m)}I_{\widehat{W^{\rm sc}}}(u) = |\Omega|^2\min \widehat{W^{\rm sc}},$$  
which, due to $|\Omega|>0$, concludes the proof. 
\end{proof}

For a simple one-dimensional example of a double integrand that fails to satisfy the necessary conditions~\eqref{con12}, see Section~\ref{section:ex1}, especially~\eqref{ref1a}.
Next, we formulate a corresponding result in the case when the minima of the double integrand and its diagonalization coincide.   

\begin{Corollary}\label{prop:ness}
	Let $W:\mathbb R^m\times \mathbb R^m\to \mathbb R$ be symmetric and lower semicontinuous with $p$-growth and $p$-coercivity such that $\min W=\min \widehat W =0$. If $I_W^{\rm rlx} = I_{W^{\rm sc}}$,  
then, 
\begin{align}\label{ness_dV}
L_0(W^{\rm sc})^\wedge = \bigcup_{A\times A\in \mathcal P_{L_0(W)}} A^{\rm co}\times A^{\rm co}. 
\end{align} 

If $m=1$, it holds that
\begin{align}\label{ness}
L_0(W^{\rm sc})^\wedge = 
( L_0(W)^\wedge)^{\rm sc}. 
\end{align} 
\end{Corollary}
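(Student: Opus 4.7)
Write $K:=L_0(W)$ and $M:=L_0(W^{\rm sc})$; both are nonempty and compact, since $W\geq 0$ forces $W^{\rm sc}\geq 0$ with $\min W^{\rm sc}=0$, while $\min\widehat W=0$ guarantees $\widehat K=K^\wedge\neq\emptyset$, i.e., $\mathcal P_K\neq\emptyset$. The strategy is to describe the zero-sublevel set $\{u\in L^p(\Omega;\R^m):I_{W^{\rm sc}}(u)=0\}$ in two independent ways and equate them. Directly, $I_{W^{\rm sc}}(u)=0$ is equivalent to $v_u\in M$ a.e.\ Indirectly, the hypothesis $I_W^{\rm rlx}=I_{W^{\rm sc}}$ identifies these $u$'s as weak $L^p$-limits of zero-energy sequences for $I_W$, whose asymptotic behavior is completely governed by Theorem~\ref{theo:Young}. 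Testing both descriptions on two-value simple functions $u=\xi\mathbbm{1}_{\Omega_1}+\zeta\mathbbm{1}_{\Omega_2}$ with disjoint $\Omega_1,\Omega_2\subset\Omega$ of positive measure and $\Omega_1\cup\Omega_2=\Omega$ will pin down~\eqref{ness_dV}.

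\textbf{The two inclusions of~\eqref{ness_dV}.} For $\supseteq$, let $(\xi,\zeta)\in A^{\rm co}\times A^{\rm co}$ with $A\times A\in\mathcal P_K$ and choose $u$ as above. Lemma~\ref{lem:oscillations} yields $(u_j)_j\subset S^\infty(\Omega;\R^m)$ with $u_j\in A$ a.e.\ and $u_j\weaklystar u$ in $L^\infty$, hence $u_j\weakly u$ in $L^p$. Since $v_{u_j}\in A\times A\subset K$ a.e., one has $I_W(u_j)=0$, so $I_W^{\rm rlx}(u)=0$ and, by hypothesis, $I_{W^{\rm sc}}(u)=0$. Decomposing the integral over the four products $\Omega_i\times\Omega_j$ and using $W^{\rm sc}\geq 0$ together with $|\Omega_i|>0$ forces $W^{\rm sc}(\xi,\xi)=W^{\rm sc}(\zeta,\zeta)=W^{\rm sc}(\xi,\zeta)=0$, so $(\xi,\zeta)\in M^\wedge$ by Definition~\ref{def:diagonalization}. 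For $\subseteq$, let $(\xi,\zeta)\in M^\wedge$; then $u$ satisfies $I_{W^{\rm sc}}(u)=0=I_W^{\rm rlx}(u)$, so there exists $(u_j)_j\subset L^p(\Omega;\R^m)$ with $u_j\weakly u$ and $I_W(u_j)\to 0$. The $p$-coercivity of $W$ makes $(v_{u_j})_j$ bounded in $L^p(\Omega\times\Omega;\R^m\times\R^m)$, and up to a subsequence it generates a product Young measure $\Lambda_{(x,y)}=\nu_x\otimes\nu_y$ with $\nu\in L^\infty_w(\Omega;\mathcal Pr(\R^m))$, cf.~\cite[Proposition~2.3]{Ped97}. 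Combining the lower-bound inequality from the fundamental theorem of Young measures with $W\geq 0$ yields $\langle\Lambda_{(x,y)},W\rangle=0$ a.e., hence $\mathrm{supp}\,\Lambda_{(x,y)}\subset K$ a.e.\ and $\Lambda\in\mathcal Y_K$. Theorem~\ref{theo:Young} then places $\Lambda$ in $\mathcal Y_{A\times A}$ for a single $A\times A\in\mathcal P_K$, which means $\mathrm{supp}\,\nu_x\subset A$ for a.e.\ $x$. Therefore $u(x)=[\nu_x]\in A^{\rm co}$ a.e., and since $u$ attains $\xi$ and $\zeta$ on sets of positive measure, $\xi,\zeta\in A^{\rm co}$.

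\textbf{Deducing~\eqref{ness} in the scalar case.} For $m=1$, it suffices to know the set identity $\widehat K^{\rm sc}=\bigcup_{A\times A\in\mathcal P_K}A^{\rm co}\times A^{\rm co}$; substituting into~\eqref{ness_dV} then yields~\eqref{ness}. This identity is a byproduct of Remark~\ref{rem:inclusions}\,b): combining~\eqref{rem123} with~\eqref{rem124} gives the equality of admissible sets $\mathcal A_{\bigcup A^{\rm co}\times A^{\rm co}}=\mathcal A_{\widehat K^{\rm sc}}$, and testing against constant and two-value simple functions, together with the fact that both sets are symmetric and invariant under passing to diagonal companions $(\xi,\zeta)\mapsto (\xi,\xi),(\zeta,\zeta)$, promotes it to pointwise set equality. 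The main anticipated obstacle is precisely this scalar-only step: for $m>1$ the separately convex hull of a diagonalized set may fail to be diagonalized, which is exactly the obstruction confining~\eqref{rem124}, and hence~\eqref{ness}, to the scalar case, whereas~\eqref{ness_dV} holds in full generality.
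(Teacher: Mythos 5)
Your proof is correct, and the argument for \eqref{ness_dV} takes a genuinely different route from the paper's. The paper formulates the hypothesis $I_W^{\rm rlx}=I_{W^{\rm sc}}$ as an identity of admissible classes, $\mathcal B_{L_0(W),p}^\infty=\mathcal A_{L_0(W^{\rm sc})}$, invokes the representation \eqref{rem123} (i.e., Theorem~\ref{prop:approx_inclusion} plus Remark~\ref{rem:BKinftyp}), and then converts the resulting $\mathcal A$-set equality $\mathcal A_{L_0(W^{\rm sc})}=\mathcal A_{\bigcup A^{\rm co}\times A^{\rm co}}$ into a set equality via \cite[Proposition~5.1]{KrZ19}, using that the right-hand side is diagonal. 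You instead work directly at the level of pairs $(\xi,\zeta)$, probing the zero sublevel set of the relaxed functional with two-value simple functions; the $\supseteq$ direction is a straight application of Lemma~\ref{lem:oscillations} plus nonnegativity, and the $\subseteq$ direction re-runs the Young-measure compactness argument (product structure via \cite[Proposition~2.3]{Ped97}, lower-bound inequality, Theorem~\ref{theo:Young}) rather than quoting the corresponding $\mathcal A$-set identity. This is more self-contained and in fact re-derives, in special cases, what the paper cites as a black box; the paper's route is shorter because it reuses the heavy machinery. One technical remark: your $\subseteq$ step implicitly uses that $p$-coercivity gives a uniform $L^p$ bound on $(v_{u_j})_j$ so that the generated Young measure is a genuine probability measure with barycenter the weak limit; this is fine for $p>1$ but worth flagging. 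For the scalar identity \eqref{ness}, your argument is essentially the paper's: both reduce to knowing $\widehat K^{\rm sc}=\bigcup_{A\times A\in\mathcal P_K}A^{\rm co}\times A^{\rm co}$ for $m=1$, via \eqref{rem124} and the invariance of $\widehat K^{\rm sc}$ under diagonalization (cf.\ \cite[Remark~4.8]{KrZ19}). Your passage from $\mathcal A$-set equality to set equality here is stated somewhat telegraphically (``testing against constant and two-value simple functions ... promotes it to pointwise set equality''); this is correct in spirit but is precisely the content of \cite[Proposition~5.1]{KrZ19}, so it would be cleaner to cite it as the paper does rather than gesture at the test-function argument.
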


\begin{proof}
According to the classical abstract theory of relaxation (see e.g.~\cite[Section~9]{Dac08} and~\cite[Section~3]{DMbook} and the references therein), the set of $L^p$-weak limits of sequences of almost minimizers for $I_W$ coincides with the set of minimizers of the relaxed functional $I_W^{\rm rlx} = I_{W^{\rm sc}}$. 

Translated into the language of nonlocal inclusions in the spirit of Section~\ref{sec:nonlocalinclusions}, this means 
\begin{align*}
 \mathcal B_{L_0(W), p}^\infty = 
 \mathcal A_{L_0(W^{\rm sc})}. 
\end{align*}
Because $L_0(W)$ is symmetric and compact as the sublevel set of a lower semicontinuous and coercive symmetric function, we infer from~\eqref{rem123} (cf.~also Theorem~\ref{prop:approx_inclusion}) in conjunction with Remark~\ref{rem:BKinftyp} that 
\begin{align*} 
\mathcal A_{L_0(W^{\rm sc})} =  \mathcal A_{L_0(W)}^\infty = 
\mathcal A_{\bigcup_{A\times A\in \mathcal P_{L_0(W)}}A^{\rm co}\times A^{\rm co}}.
\end{align*} 
By \cite[Proposition~5.1]{KrZ19}, this is equivalent to 
\begin{align}\label{276}
 L_0(W^{\rm sc})^\wedge = \Bigl(\bigcup_{A\times A\in \mathcal P_{L_0(W)}} A^{\rm co}\times A^{\rm co}\Bigr)^\wedge.
\end{align}
Since the right-hand side of~\eqref{276} is the union of Cartesian products and thus, already diagonal, this shows~\eqref{ness_dV}. 
	
Regarding $m=1$,  
we infer from~\eqref{rem124} that $\mathcal A_{L_0(W)}^\infty = \mathcal A_{(L_0(W)^\wedge)^{\rm sc}}$. Then,~\eqref{ness} follows from the same argumentation as above, along with the observation that diagonalization keeps $(L_0(W)^\wedge)^{\rm sc}$ invariant, see~\cite[Remark~4.8]{KrZ19}. 
\end{proof}

Given that the operations of diagonalization and separate convexification do not commute,~\eqref{ness_dV} and \eqref{ness} impose in general non-trivial restrictions on $W$, as the following example for $m=1$ illustrates.

\begin{Example} 
Let $K=\{(\pm 1, 0), (0, \pm 1), (2, 2)\}\subset \R\times \R$ and consider
\begin{align*}
W(\xi, \zeta)={\rm dist}^p ((\xi, \zeta), K)\quad \text{ for $(\xi, \zeta)\in \R\times \R$,}
\end{align*}
with respect to any norm on $\R\times \R$. 
On the one hand, $L_0(W)^\wedge = \widehat K= \{(2,2)\}$ is already separately convex. On the other hand, 
\begin{align*}
L_0(W^{\rm sc})\supset L_0(W)^{\rm sc} = K^{\rm sc} =\{0\}\times [-1,1] \cup [-1,1]\times \{0\}\cup \{(2,2)\},
\end{align*} 
which, after diagonalization, turns into $L_0(W^{\rm sc})^\wedge\supset \widehat{K^{\rm sc}} = \{(0,0), (2,2)\}$. This shows that~\eqref{ness} is not satisfied here. 
\end{Example}
\section{Counterexamples for preservation of double-integral character}\label{sec:counterexample}

In this section, we present and analyze two examples to disprove that the relaxation of a double integral yields again a double integral. In both cases, the integrand $W$ of $I_W$ as in~\eqref{I_W} with $m=1$ measures the distance to a compact set $K\subset \R\times \R$ of diamond-like shape; choosing the $1$-norm is a good fit with the structure of $K$ and helps to keep the calculations simple. The conceptual difference between the two densities is that for the first, $\min W\neq\min \widehat W$, while the second satisfies $\min W=\min \widehat W$. 

\subsection{First counterexample}\label{section:ex1}
Let $W:\R\times \R\to \R$ be the function defined by
\begin{align}\label{Wexample}
W(\xi, \zeta) = {\rm dist}^p_1((\xi, \zeta), K) 
\end{align} 
for $(\xi, \zeta)\in \R\times \R$ with $K=\{(\pm 1, 0), (0, \pm 1)\}$ and the underlying norm $\|\cdot \|_{1}$. 
We start by observing that 
$K^{\rm sc} =  \{0\}\times [-1,1]\cup [-1,1]\times \{0\}$. 

Looking into the sublevel sets of $W$ and its different convex envelopes gives the following relations: For $c\in \R$, 
\begin{align}\label{sublevels}
L_c(W)= \begin{cases}
B^1_{1+c^{1/p}}(0,0) & \text{for $c\geq 1$,}\\
\bigcup_{(\xi,\zeta)\in K} B^1_{c^{1/p}}(\xi, \zeta) & \text{for $0\leq c\leq 1$,}\\
\emptyset & \text{for $c<0$,}
\end{cases}
\end{align}
and 
\begin{align}\label{sublevels2}
 & L_c(W^{\rm sc}) \supset L_c(W^{\rm slc})\supset  L_c(W)^{\rm sc} \nonumber\\ &\qquad\qquad  = \begin{cases}
 L_c(W) & \text{for $c\geq 1$,}\\
L_c(W)\cup [(-c^{1/p}, c^{1/p})\times (-1,1) ]\cup [(-1,1)\times (-c^{1/p}, c^{1/p}) ] & \text{for $0\leq c\leq 1$,}\\
\emptyset & \text{for $c<0$,}
\end{cases} 
\end{align}
see also Figure~\ref{fig:levelsets}; by \eqref{Wco_distance}, 
$W^{\rm co}={\rm dist}^p_1(\cdot, K^{\rm co})$, and hence, 
\begin{align}\label{lcWco}
L_c(W^{\rm co}) = \begin{cases}
B^1_{1+c^{1/p}}(0,0)  &  \text{ for $c\geq 0$},\\
\emptyset & \text{ for $c<0$.}
\end{cases}
\end{align}
\begin{figure}\label{fig:levelsets}
\begin{tikzpicture}
\begin{scope}[xshift = -4cm, yshift = 0cm, scale = 0.6]

\draw[->, thick] (0,-6.5) -- (0,6.5);
\draw[->, thick] (-6.5,0) -- (6.5,0);
\draw (-0.5,6.1) node {$\zeta$};
\draw (6.0,-0.5) node{$\xi$};
\draw (2.0, -0.5) node{\small $1$};
\draw (2.0,-0.2) -- (2.0, 0);


\draw[violet, fill=violet] (-2,0) circle (0.1cm);
\draw[violet, fill=violet] (2,0) circle (0.1cm);
\draw[violet, fill=violet] (0,-2) circle (0.1cm);
\draw[violet, fill=violet] (0, 2) circle (0.1cm);
\draw[thick, orange]  (-5, 0) --  (0,-5) -- (5,0)  -- (0,5) -- (-5,0); 
\draw[thick, yellow]  (-6, 0) --  (0,-6) -- (6,0)  -- (0,6) -- (-6,0);
\draw[thick, red]  (-4, 0) --  (0,-4) -- (4,0)  -- (0,4) -- (-4,0); 
\draw[thick, red]  (-2, -2) --  (2,2); 
\draw[thick, red]  (-2, 2) --  (2,-2); 

\draw[thick, purple]  (-3, 0) --  (-2,-1) -- (-1,0)  -- (-2,1) -- (-3,0); 
\draw[thick, purple]  (3, 0) --  (2,-1) -- (1,0)  -- (2,1) -- (3,0); 
\draw[thick, purple]  (0, -3) --  (-1,-2) -- (0, -1)  -- (1, -2) -- (0, -3); 
\draw[thick, purple]  (0, 3) --  (-1, 2) -- (0,1)  -- (1,2) -- (0,3); 

\draw (-4,4) node {$L_c(W)$};

\end{scope}
%
%
\begin{scope}[xshift = 4cm, yshift = 0cm, scale = 0.6]

\draw[->, thick] (0,-6.5) -- (0,6.5);
\draw[->, thick] (-6.5,0) -- (6.5,0);
\draw (-0.5,6.1) node {$\zeta$};
\draw (6.0,-0.5) node{$\xi$};
\draw (2.0, -0.5) node{\small $1$};
\draw (2.0,-0.2) -- (2.0, 0);


\draw[violet, fill=violet] (-2,0) circle (0.1cm);
\draw[violet, fill=violet] (2,0) circle (0.1cm);
\draw[violet, fill=violet] (0,-2) circle (0.1cm);
\draw[violet, fill=violet] (0, 2) circle (0.1cm);
\draw[violet, ultra thick] (0,2) -- (0,-2);
\draw[violet, ultra thick] (2,0) -- (-2,0);

\draw[thick, orange]  (-5, 0) --  (0,-5) -- (5,0)  -- (0,5) -- (-5,0); 
\draw[thick, yellow]  (-6, 0) --  (0,-6) -- (6,0)  -- (0,6) -- (-6,0);
\draw[thick, red]  (-4, 0) --  (0,-4) -- (4,0)  -- (0,4) -- (-4,0); 

\draw[thick, purple]  (-3, 0) --  (-2,1) -- (-1,1) -- (-1,2) -- (0, 3) -- (1,2) -- (1,1) -- (2,1) -- (3,0) -- (2,-1) -- (1,-1) -- (1,-2) -- (0,-3)  -- (-1,-2) -- (-1,-1) -- (-2,-1) -- (-3,0); 

\draw (-4,4) node {$L_c(W)^{\rm sc}$};

\end{scope}

\begin{scope}[xshift = -4cm, yshift = -8cm, scale = 0.6]

\draw[->, thick] (0,-5.5) -- (0,5.5);
\draw[->, thick] (-6.5,0) -- (6.5,0);
\draw (-0.5,5.1) node {$\zeta$};
\draw (6.0,-0.5) node{$\xi$};
\draw (1.8, -0.5) node{\small $1$};
\draw (2.0,-0.2) -- (2.0, 0);


\draw[thick, red]  (-2, 2) --  (2,2) -- (2,-2)  -- (-2,-2) -- (-2,2); 
\draw[thick, orange]  (-2.5, 2.5) --  (2.5,2.5) -- (2.5,-2.5)  -- (-2.5,-2.5) -- (-2.5,2.5); 
\draw[thick, yellow]  (-3, 3) --  (3,3) -- (3,-3)  -- (-3,-3) -- (-3,3); 

\draw (-4,4) node {$L_c(W)^\wedge = L_c(\widehat W)$};

\end{scope}

\begin{scope}[xshift = 4cm, yshift = -8 cm, scale = 0.6]

\draw[->, thick] (0,-5.5) -- (0,5.5);
\draw[->, thick] (-6.5,0) -- (6.5,0);
\draw (-0.5,5.1) node {$\zeta$};
\draw (6.0,-0.5) node{$\xi$};
\draw (1.8, -0.5) node{\small $1$};
\draw (2.0,-0.2) -- (2.0, 0);

\draw[thick, purple]  (-1, 1) --  (1,1) -- (1,-1)  -- (-1,-1) -- (-1,1); 
\draw[thick, red]  (-2, 2) --  (2,2) -- (2,-2)  -- (-2,-2) -- (-2,2); 
\draw[thick, orange]  (-2.5, 2.5) --  (2.5,2.5) -- (2.5,-2.5)  -- (-2.5,-2.5) -- (-2.5,2.5); 
\draw[thick, yellow]  (-3, 3) --  (3,3) -- (3,-3)  -- (-3,-3) -- (-3,3); 
\draw[violet, fill=violet] (0,0) circle (0.1cm);

\draw (-4,4) node {$(L_c(W)^{\rm sc})^\wedge$};

\end{scope}
\end{tikzpicture}
\caption{Illustration of the (sub)level sets of $W$ for $p=2$ and their separate convexifications and diagonalizations for the levels $c=0$ (violet), $c =\tfrac{1}{4}$ (purple), $c=1$ (red), $c=\tfrac{5}{4}$ (orange), $c=\tfrac{3}{2}$ (yellow).}
\end{figure}
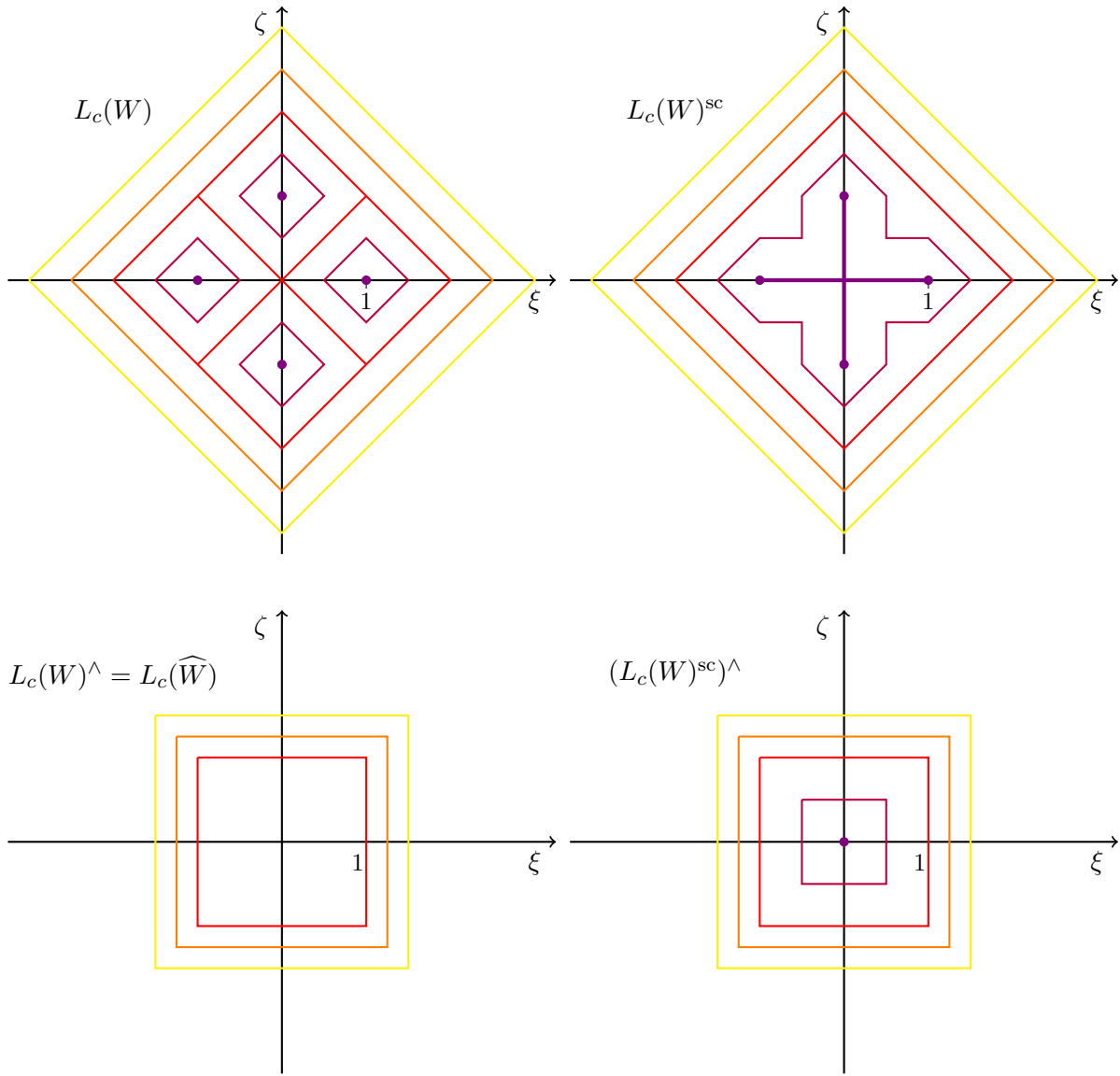

After diagonalization,~\eqref{sublevels} and~\eqref{sublevels2} turn into 
\begin{align}\label{Wex}
L_c(W)^\wedge = \begin{cases} [-c^{1/p},c^{1/p} ]^2 & \text{for $c\geq 1$,}\\
\emptyset & \text{for $c<1$,}
\end{cases}
\qquad \text{and}\qquad 
( L_c(W)^{\rm sc})^\wedge = \begin{cases} [-c^{1/p},c^{1/p} ]^2 & \text{for $c\geq 0$,}\\
\emptyset & \text{for $c<0$,} 
\end{cases}
\end{align}
for $c\in \R$, see Figure~\ref{fig:levelsets}. Observe in particular that $L_0(W)^{\wedge}=\widehat K = \emptyset$. In view of~\eqref{identity_LcW} and~\eqref{Wex}, one can deduce an explicit expression for the diagonalization of $W$, that is, 
\begin{align*}
\widehat W(\xi, \zeta) = \dist_\infty^p((\xi, \zeta), [-1,1]^2) +1\quad 
\text{for $(\xi, \zeta)\in \R^m$.}
\end{align*} 
On the other hand, by \eqref{sublevels2}, \eqref{Wex} and \eqref{identity_LcW}$,  L_0(\widehat{W^{\rm sc}})=L_0(W^{\rm sc})^\wedge\supset\{(0,0)\}$.

Summing up, the previous calculations show that
\begin{align}\label{ref1a}
\min W =0<1= \min \widehat W\quad\text{ and }\quad \min \widehat{W^{\rm sc}} = \min W^{\rm sc} = \min W=0.
\end{align} 

Thus, according to Corollary~\ref{cor:neq}, separate convexification of the double integrand $W$ fails to give a representation for $I_W^{\rm rlx}$. 
The next result provides even more, namely that the relaxation of $I_W$ is not of double-integral form at all.

\begin{Proposition}\label{prop:counterexample}
Let $W:\R\times \R\to \R$ as in~\eqref{Wexample}.
There exists no symmetric, lower semicontinuous double integrand $G: \R\times \R\to \R$ with $p$-growth such that 
$I_W^{\rm rlx}= I_G$. 
\end{Proposition}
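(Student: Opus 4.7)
The plan is to argue by contradiction: assume $I_W^{\rm rlx}=I_G$ for some symmetric, lower semicontinuous $G:\R\times\R\to\R$ with $p$-growth. The target is $G(0,0)=0$, so that testing $I_G$ against the constant function $u\equiv 0$ yields $\min I_G\leq I_G(0)=|\Omega|^2 G(0,0)=0$. On the other hand, the setup computations~\eqref{Wex} together with~\eqref{ref1a} show $\min W=0<1=\min\widehat W$, so Proposition~\ref{prop:geq} produces $\inf I_W>0$; combined with $\min I_G=\min I_W^{\rm rlx}=\inf I_W$ from standard relaxation theory (using the $p$-coercivity of $W$), this would deliver the contradiction.

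To pin down $G(0,0)$, the first step is to extract order relations. Since $I_G=I_W^{\rm rlx}$ is $L^p$-weakly lower semicontinuous, $G$ must be separately convex~\cite{BeP06,Mun09}. Moreover, $W^{\rm sc}$ is itself separately convex and bounded above by $W$, so $I_{W^{\rm sc}}$ is weakly lower semicontinuous with $I_{W^{\rm sc}}\leq I_W$; hence $I_{W^{\rm sc}}\leq I_W^{\rm rlx}=I_G\leq I_W$. Lemma~\ref{lem:FG} applied to both halves of this chain yields the diagonal sandwich
\begin{equation*}
W^{\rm sc}(\xi,\xi)\leq G(\xi,\xi)\leq W(\xi,\xi)\quad\text{for all }\xi\in\R,
\end{equation*}
which at $\xi=0$ gives the lower bound $G(0,0)\geq W^{\rm sc}(0,0)=0$, using that $(0,0)\in K^{\rm sc}\subset L_0(W^{\rm sc})$.

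For a matching upper bound, the plan is to apply Proposition~\ref{intrep} with $A=\{\xi\in\R: W(\xi,\xi)\neq W^{\rm sc}(\xi,\xi)\}$. The crucial observation is that $\pm 1\in A^c$: a direct computation gives $W(1,1)=1$ and $W^{\rm co}(1,1)=\dist_1^p((1,1),K^{\rm co})=1$, since $K^{\rm co}$ is the closed unit $\ell^1$-ball; then the chain $W^{\rm co}\leq W^{\rm sc}\leq W$ from~\eqref{2.3} forces $W^{\rm sc}(\pm 1,\pm 1)=1$. Proposition~\ref{intrep} therefore provides $W^{\rm sc}\leq G\leq W$ on $(A\times A)^c$; evaluating at the points $(\pm 1,0)\in(A\times A)^c$ and using $W(\pm 1,0)=0=W^{\rm sc}(\pm 1,0)$, we deduce $G(1,0)=G(-1,0)=0$.

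Finally, separate convexity of $G$ in the first argument, applied to the convex slice $\xi\mapsto G(\xi,0)$, yields
\begin{equation*}
G(0,0)\leq\tfrac{1}{2}G(1,0)+\tfrac{1}{2}G(-1,0)=0,
\end{equation*}
which together with the earlier lower bound gives $G(0,0)=0$ and closes the contradiction. The main technical step is the identification $W^{\rm sc}(\pm 1,\pm 1)=1$ via the coincidence $W^{\rm co}(1,1)=W(1,1)=1$ at the extreme corners of $K^{\rm co}$; once this is in place, everything else follows cleanly from the order relations developed in Section~\ref{subsec:order} and the separate convexity of $G$.
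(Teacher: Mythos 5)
Your proof is correct and follows essentially the same strategy as the paper: sandwich $G$ between $W^{\rm sc}$ and $W$ via Lemma~\ref{lem:FG} and Proposition~\ref{intrep}, force $G(0,0)=0$ using separate convexity of $G$, and contradict Proposition~\ref{prop:geq}. The only (minor but clean) deviation is that, instead of establishing $W=W^{\rm sc}$ on all of $[(-1,1)^2]^c$ via a comparison of sublevel sets as the paper does, you take $A=\{\xi:W(\xi,\xi)\neq W^{\rm sc}(\xi,\xi)\}$ and verify directly the single identity $W(1,1)=W^{\rm co}(1,1)=1$, which is all that is needed to place $\pm1$ in $A^c$ and deduce $G(\pm1,0)=0$; this shortens the route to $G(0,0)\leq 0$ at no cost.
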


\begin{proof}
We argue by contradiction, and suppose therefore that 
\begin{align}\label{est45}
0\leq I_{\rm W^{\rm sc}}\leq I_W^{\rm rlx} = I_G\leq I_W
\end{align} for some $G:\R\times \R\to\R$ as in the statement. 

A comparison of the sublevel sets of $W$ and $W^{\rm co}$ in~\eqref{sublevels} and \eqref{lcWco} shows that 
\begin{align*}
L_c(W)\setminus (-1,1)^2 = L_c(W^{\rm co}) \setminus (-1,1)^2
\end{align*}
which entails that $W$ coincides with $W^{\rm co}$ outside of $(-1,1)^2$. In view of~\eqref{2.3},
\begin{align*}
\text{$W= W^{\rm slc}=W^{\rm sc}= W^{\rm co}$ on $[(-1,1)^2]^c$. }
\end{align*}
 We invoke Proposition~\ref{intrep} to infer that $G=W$ on the complement of $(-1,1)^2$, and therefore, $G=0$ on $K$. 
Moreover, $G(0,0)\geq W^{\rm sc}(0,0)\geq 0$ by Lemma \ref{lem:FG}. 
Consequently, the $L^p$-weak lower semicontinuity of $I_G=I_W^{\rm rlx}$, which yields that $G$ is separately convex (see e.g.~\cite[Theorem~1.1]{BeP06}), leads us to conclude that $G$ vanishes on $\{0\}\times [-1,1]\cup [-1,1]\times \{0\}= K^{\rm sc}$; in particular, $G(0,0)=0$. 

This proves that $\min_{u\in L^p(\Omega)} I_G(u)= I_G(0) = 0$, cf.~\eqref{est45}. As $I_G$ coincides with the relaxation of $I_W$ by assumption, one has that
\begin{align}\label{856}
\inf_{u\in L^p(\Omega)} I_W(u) = \min_{u\in L^p(\Omega)} I_W^{\rm rlx}(u) = \min_{u\in L^p(\Omega)} I_G(u) = 0.
\end{align}

However, by~Proposition~\ref{prop:geq} in combination with~\eqref{ref1a}, 
$\inf_{u\in L^p(\Omega)}I_W(u) > |\Omega|^2\min W =0$, which contradicts~\eqref{856} and finishes the proof.
\end{proof}

\begin{Remark} 
Alternatively, there is also a direct and self-contained argument for the last step in the previous proof, meaning, one that  does not make use of Proposition~\ref{prop:geq}. Following along the lines of \cite[Section~3]{Ped16}, we argue by contradiction and let $\inf_{u\in L^p(\Omega)} I_W(u)=0$, so that
\begin{align}\label{IWto0}
I_W(u_j) = \int_\Omega\int_\Omega W(v_{u_j})\, dx \, dy \to 0 \quad \text{as $j\to \infty$}
\end{align} for some sequence $(u_j)_j\subset L^p(\Omega)$. 
Due to $W\geq 0$, we know that $(v_{u_j})_j$ needs to concentrate around $L_0(W)=K$, and since $K$ is finite, it has to concentrate partially around at least one point, without loss of generality $(1,0)$. Then, there exists a set $\omega\subset \Omega$ of positive measure where $(u_j)_j$ concentrates around $1$, which again entails that $(v_{u_j})_j$ concentrates around $(1,1)$ on $\omega\times \omega$, and thus,  
\begin{align*}
\int_\omega\int_\omega W(v_{u_j})\, dx\,dy \to |\omega|^2\, W(1,1)  \quad \text{as $j\to \infty$.}
\end{align*} 
Observing that the limit is strictly positive because $(1,1)\notin K=L_0(W)$ and $\omega$ has non-vanishing $\mathcal L^{n}$-measure, produces a contradiction with~\eqref{IWto0}. 
\end{Remark}

Not only homogeneous double integrals fail to provide an explicit representation for the $L^p$-weak lower semicontinuous envelope of $I_W$, allowing for inhomogeneous double integrands does not help in obtaining correct relaxation formulas either. 

\begin{Remark} In generalization of Proposition~\ref{prop:counterexample}, one can show that it is not possible to express $I^{\rm rlx}_W$ with $W$ as in~\eqref{Wexample} in terms of
\begin{align*}
L^p(\Omega)\ni u\mapsto \int_{\Omega}\int_{\Omega} G(x,y, u(x), u(y)) \,dx\,dy,
\end{align*}
where $G:\Omega\times \Omega\times\R\times \R\to \R$ is a symmetric and normal function with $p$-growth, i.e., $G$ satisfies
\begin{itemize}
\item[$(i)$] $G(x,y, \xi, \zeta) = G(y, x, \xi, \zeta)$ and $G(x,y, \xi, \zeta) = G(x,y, \zeta, \xi)$ for all $x,y\in \Omega$ and $\xi, \zeta\in \R$;
\item[$(ii)$] $G(x,y, \cdot, \cdot)$ is lower semicontinuous for a.e.~$(x,y)\in \Omega\times \Omega$ and  $G(\cdot, \cdot, \xi, \zeta)$ is measurable for all $(\xi, \zeta)\in \R\times \R$; 
\item[$(iii)$] $|G(x,y, \xi, \zeta)|\leq C (a(x,y)+ |\xi|^p + |\zeta|^p)$ for all $(x,y)\in \Omega\times\Omega$ and $(\xi, \zeta)\in \R\times \R$ with a constant $C>0$ and $a\in L^1(\Omega\times \Omega)$.
\end{itemize}

To see this, it suffices to substitute $G$ in the proof of Proposition~\ref{prop:counterexample} by 
\begin{align*}
\overline{G}(\xi, \zeta) := \int_{\Omega}\int_{\Omega} G(x,y, \xi, \zeta)\, dx\, dy \quad \text{for $(\xi, \zeta)\in \R\times \R$,}
\end{align*} 
and to use~\cite[Theorem~2.5]{Ped16} in place of~\cite[Theorem~1.1]{BeP06}.
\end{Remark}

\subsection{Second counterexample}\label{subsec:counterexample2}
The double integrand for our second example is qualitatively different from the first, in the sense that its minimum does not change under diagonalization. 

\begin{Proposition}\label{prop:counterexample2}
Let $K= \partial B_1^{1}(0,0)= \{(\xi, \zeta)\in \R\times \R: |\xi| + |\zeta| = 1\}$ and let $W:\R\times \R\to \R$ be given by 
\begin{align}\label{densityW2}
W(\xi, \zeta) = {\rm dist}^p_1((\xi, \zeta), K) \quad \text{ for $(\xi, \zeta)\in \R\times  \R$.}
\end{align} 
There exists no symmetric, lower semicontinuous double integrand $G: \R\times \R\to \R$ with $p$-growth such that 
$I_W^{\rm rlx}= I_G$.
\end{Proposition}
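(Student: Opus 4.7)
The plan is to argue by contradiction: assume $I_W^{\rm rlx}=I_G$ for a symmetric, lower semicontinuous $G:\R\times\R\to\R$ with $p$-growth, and produce a non-constant $v:\Omega\to\{0,1\}$ for which $I_W^{\rm rlx}(v)$ strictly exceeds $I_G(v)$.

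First, I would pin down the values of $G$ on $\{0,1\}\times\{0,1\}$. Since the diamond boundary $K$ satisfies $K^{\rm sc}=K^{\rm co}=B_1^1(0,0)$, Lemma~\ref{lem:Wsc=Wco} gives $W^{\rm sc}=W^{\rm co}={\rm dist}_1^p(\cdot,B_1^1(0,0))$. A quick computation shows $W(\xi,\xi)=|2|\xi|-1|^p$ and $W^{\rm sc}(\xi,\xi)=(2|\xi|-1)_+^p$, which agree precisely for $|\xi|\geq 1/2$. Proposition~\ref{intrep} with $A=(-1/2,1/2)$ then yields $W^{\rm sc}\leq G\leq W$ outside the open square $A\times A$. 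The points $(0,\pm 1),(\pm 1,0)\in K$ (where $W=W^{\rm sc}=0$) and $(\pm 1,\pm 1)$ (where $W=W^{\rm sc}=1$) all lie in $(A\times A)^c$, so $G$ must equal $0$ and $1$ at these points, respectively. For $G(0,0)$, which lies inside $A\times A$ where Proposition~\ref{intrep} is silent, I would use Lemma~\ref{lem:oscillations} to build an $L^\infty$-sequence valued in $\{-1/2,1/2\}$ converging weakly$^\ast$ to $0$; since $\{-1/2,1/2\}^2\subset K$, such a sequence satisfies $I_W\equiv 0$, whence $I_W^{\rm rlx}(0)=0$ and consequently $G(0,0)=0$. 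Accordingly, for any non-constant $v:\Omega\to\{0,1\}$ with $E_i:=\{v=i\}$, one computes $I_G(v)=|E_1|^2$.

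The main obstacle is to show $I_W^{\rm rlx}(v)>|E_1|^2$, which I would attack with a Young measure argument. Given $(u_j)_j\subset L^p(\Omega)$ with $u_j\weakly v$ and $I_W(u_j)\to I_W^{\rm rlx}(v)$, I would extract the Young measure $\nu$ of a subsequence, so that $v_{u_j}$ generates $\Lambda=\{\nu_x\otimes\nu_y\}_{(x,y)}$ with $[\nu_x]=v(x)$ a.e. Jensen's inequality applied to the convex function $W^{\rm co}=W^{\rm sc}$ yields $\langle\nu_x\otimes\nu_y,W\rangle\geq W^{\rm sc}(v(x),v(y))$, giving $I_W^{\rm rlx}(v)\geq|E_1|^2$. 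Equality would force $\langle\nu_x\otimes\nu_y,W\rangle=0$ for a.e.~$(x,y)\in E_0\times(E_0\cup E_1)$, i.e., ${\rm supp}\,\nu_x\times{\rm supp}\,\nu_y\subset K$ there. Applying Theorem~\ref{theo:Young} on $E_0\times E_0$ together with the observation that $\{-1/2,1/2\}^2$ is the unique maximal Cartesian subset of $K$ (since $(a,a)\in K$ forces $|a|=1/2$), one concludes that ${\rm supp}\,\nu_x\subset\{-1/2,1/2\}$ for a.e.~$x\in E_0$. Propagating to $E_1$: for such an $x$ and any $a\in{\rm supp}\,\nu_x$, the slice $\{b:(a,b)\in K\}\subset\{-1/2,1/2\}$ forces ${\rm supp}\,\nu_y\subset\{-1/2,1/2\}$ for a.e.~$y\in E_1$, so $[\nu_y]\in[-1/2,1/2]$, contradicting $[\nu_y]=1$. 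This delivers the required strict inequality $I_W^{\rm rlx}(v)>|E_1|^2=I_G(v)$ and thus contradicts $I_W^{\rm rlx}=I_G$.
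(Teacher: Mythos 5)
Your proof is correct and takes a genuinely leaner route than the paper's. In Step~1 the paper establishes the full identity $G=W^{\rm sc}$, which requires combining separate convexity of $G$ with $G=0$ on $\partial B_1^1(0,0)$ to settle the interior of the ball; you sidestep this by evaluating $G$ only at the four points of $\{0,1\}\times\{0,1\}$, obtaining $G(0,1)=G(1,0)=0$ and $G(1,1)=1$ from Proposition~\ref{intrep} and $G(0,0)=0$ from a single oscillation sequence valued in $\{\pm\tfrac12\}$. For Step~2 the paper splits $I_W(u_j)$ into the three pieces over $\Omega_1\times\Omega_1$, $(\Omega\setminus\Omega_1)^2$ and the cross term and then runs a concentration argument by hand; you instead pass to the Young measure $\nu$ of a recovery sequence, apply Jensen with $W^{\rm co}=W^{\rm sc}$ so that the equality $I_W^{\rm rlx}(v)=|E_1|^2$ forces ${\rm supp}(\nu_x\otimes\nu_y)\subset K$ for a.e.~$(x,y)\in E_0\times\Omega$, invoke $\mathcal Y_K=\bigcup_{P}\mathcal Y_P$ from Proposition~\ref{lem:YM} on $E_0$ together with the fact that $\{\pm\tfrac12\}^2$ is the unique maximal Cartesian subset of $K$, and then propagate ${\rm supp}\,\nu_x\subset\{\pm\tfrac12\}$ through the $K$-slices to conclude ${\rm supp}\,\nu_y\subset\{\pm\tfrac12\}$ on $E_1$, contradicting $[\nu_y]=1$. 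The Young measure route is more structural and makes systematic use of the Section~\ref{sec:nonlocalinclusions} machinery, whereas the paper's Step~2 is more elementary. One small thing worth recording explicitly in your write-up: you apply Proposition~\ref{lem:YM}/Theorem~\ref{theo:Young} with $\Omega$ replaced by the measurable set $E_0$ of positive measure; the proofs go through verbatim, but the statements in the paper are formulated for the fixed domain $\Omega$, so a remark to that effect would close the gap.
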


\begin{proof} Arguing by contradiction, we suppose that $I_W^{\rm rlx}=I_G$ with $G$ as in the statement, and split the proof into two steps. First, Step~1 shows that necessarily $G=W^{\rm sc}$, and then, we conclude in Step~2 that $I_{W}^{\rm rlx}\neq  I_{W^{\rm sc}}$, which yields the desired contradiction.

\textit{Step~1: $G=W^{\rm sc}$.} 
Since $K^{\rm sc}=K^{\rm co}= B_1^{1}(0,0)$, it follows from Lemma~\ref{lem:Wsc=Wco} that $W^{\rm sc}=W^{\rm co}$. With $W^{\rm co} = {\rm dist}^p_1(\cdot, K^{\rm co})$ by~\eqref{Wco_distance}, we see that $W$ and $W^{\rm sc}$ differ only in the interior of $K^{\rm co} = B_1^{1}(0,0)$,  which we will denote by $B$ in the following; hence, 
\begin{align}\label{Bc}
W^{\rm sc} = W \quad \text{on $B^c$.}  
\end{align}
A comparison between $G$, $W$ and $W^{\rm sc}$ along the diagonal yields
\begin{align*}
W^{\rm sc}(\xi, \xi) = G(\xi, \xi)=W(\xi, \xi)\quad \text{ for $\xi\in \R$ with $|\xi|\geq \tfrac{1}{2}$,}
\end{align*}
according to Lemma~\ref{lem:FG}; note that like in Section~\ref{section:ex1},~\eqref{est45} has to hold here as well.  
In view of Proposition~\ref{intrep}, $W^{\rm sc}\leq G\leq W$ on $[(-\frac{1}{2}, \frac{1}{2})^2]^c$, we obtain together with~\eqref{Bc} that
\begin{align}\label{G=Wsc1}
G=W^{\rm sc}\quad \text{ on 
 $B^c$.} 
\end{align}

Next, we prove that 
\begin{align}\label{G=Wsc2}
G=W^{\rm sc}\quad \text{ on $[-\tfrac{1}{2}, \tfrac{1}{2}]^2$.}
\end{align}
The argument is based on the observation that $\widehat K = \{-\frac{1}{2}, \frac{1}{2}\}^2$, and therefore $\widehat K^{\rm sc}=\widehat K^{\rm co}=[-\frac{1}{2}, \frac{1}{2}]^2$. 

For $(\xi, \zeta)\in [-\frac{1}{2}, \frac{1}{2}]^2$, let
 $v=\xi\mathbbm{1}_{\Omega_\xi} + \zeta \mathbbm{1}_{\Omega\setminus \Omega_\xi}$ with a measurable set $\Omega_\xi\subset \Omega$ of measure $\lambda |\Omega|$ with $\lambda\in (0,1)$. By Lemma~\ref{lem:oscillations}, one can find a sequence $(v_j)_j\subset S^\infty(\Omega)$ of simple functions that oscillate suitably between the values $\pm\frac{1}{2}$ such that $v_j\weaklystar v$ in $L^\infty(\Omega)$. 
As 
\begin{align*}
0 \leq I_{W^{\rm sc}}(v) = I_{G}(v) = I_W^{\rm rlx}(v) \leq \limsup_{j\to \infty} I_W(v_j) = 0,
\end{align*} 
owing to $W=0$ on $\widehat K\subset K$, it follows that 
\begin{align*}
\lambda^2 G(\xi, \xi) + (1-\lambda)^2 G(\zeta,\zeta) + 2 \lambda (1-\lambda)G(\xi, \zeta)=0\quad\text{ for all $\lambda\in (0,1)$.}
\end{align*} 
Letting $\lambda\to 0$ and $\lambda\to 1$ yields first that $G(\xi, \xi)=G(\zeta, \zeta)=0$, and eventually, also $G(\xi, \zeta)=0$. 
This finishes the proof of~\eqref{G=Wsc2}. 

We can now infer from~\eqref{G=Wsc1} and~\eqref{G=Wsc2} that $G$ coincides with $W^{\rm sc}$ everywhere on the diagonal, and since $I_{W^{\rm sc}}\leq I_W^{\rm rlx} =I_G$, Lemma~\ref{lem:FG} implies that 
$G\geq W^{\rm sc}\geq 0$ on $\R\times \R$. 
Since $G$ is non-negative with $G=0$ on $\partial B_1^1(0,0)=\partial B$ by~\eqref{G=Wsc1} and $G$ has to be separately convex (see e.g.~\cite[Theorem~1.1]{BeP06}), it follows that
\begin{align}\label{GWsc}
G=0  \quad \text{on $B_1^1(0,0)$.}
\end{align} 
In combination with~\eqref{G=Wsc1}, this shows $G=W^{\rm sc}$.

\textit{Step~2: $I_W^{\rm rlx}\neq I_{W^{\rm sc}}$.} 
Considering the simple function $v=\mathbbm{1}_{\Omega_1}$, where $\Omega_1\subset \Omega$ is a set of positive Lebesgue measure, we aim to show that 
\begin{align*}
 I_{W}^{\rm rlx}(v) \neq I_{W^{\rm sc}}(v).
 \end{align*}
Assume to the contrary that $I_{W}^{\rm rlx}(v) = I_{W^{\rm sc}}(v)$. Then, by the definition of the relaxed functional, there exists a sequence $(u_j)_j\subset L^p(\Omega)$ such that $u_j\weakly v$ in $L^p(\Omega)$ and
\begin{align}\label{est98}
\lim_{j\to \infty} I_W(u_j)& = I_{W^{\rm sc}}(v) =  |\Omega_1|^2 W^{\rm sc}(1,1) + |\Omega\setminus \Omega_1|^2 W^{\rm sc}(0,0) + 2|\Omega_1| |\Omega\setminus \Omega_1|W^{\rm sc}(1,0) =  |\Omega_1|^2;
\end{align} 
here, in the last step we have exploited again that $W^{\rm sc}=W^{\rm co}=\dist^p_1(\cdot, K^{\rm co})$. 
On the other hand, along with the weak convergence of the restrictions of $(u_j)_j$ to $\Omega_1$ and $\Omega\setminus \Omega_1$, i.e.~$u_j|_{\Omega_1}\weakly 1$ in $L^p(\Omega_1)$ and  $u_j|_{\Omega\setminus \Omega_1}\weakly 0$ in $L^p(\Omega\setminus \Omega_1)$, as well as the symmetry and non-negativity of $W$, 
\begin{align}  
\displaystyle\lim_{j\to \infty} I_W(u_j) & = \lim_{j\to \infty}\Bigl(\int_{\Omega_1}\int_{\Omega_1} W(u_j(x), u_j(y)) \, dx\, dy+ \int_{\Omega\setminus\Omega_1}\int_{\Omega\setminus \Omega_1} W(u_j(x), u_j(y)) \, dx\, dy  \nonumber\\ &\qquad +  2\int_{\Omega_1}\int_{\Omega\setminus \Omega_1} W(u_j(x), u_j(y)) \, dx\, dy\Bigr) \nonumber \\ 
& \geq |\Omega_1|^2W^{\rm sc}(1,1) + |\Omega\setminus\Omega_1|^2W^{\rm sc}(0,0)  +2\, \liminf_{j\to \infty} \int_{\Omega_1}\int_{\Omega\setminus \Omega_1} W(u_j(x), u_j(y)) \, dx\, dy \label{est74}\\ 
& = |\Omega_1|^2  +2\, \liminf_{j\to \infty} \int_{\Omega_1}\int_{\Omega\setminus \Omega_1} W(u_j(x), u_j(y)) \, dx\, dy\geq |\Omega_1|^2. \nonumber 
\end{align}
Combining~\eqref{est74} with~\eqref{est98} turns all equalities in~\eqref{est74} into equalities. Hence, after passing to a suitable (not relabeled) subsequence, 
\begin{align}\label{conv_11}
\lim_{j\to \infty} \int_{\Omega_1}\int_{\Omega_1} W(u_j(x), u_j(y)) \, dx\, dy = |\Omega_1|^2,
\end{align}
\begin{align}\label{conv_22}
\lim_{j\to \infty} \int_{\Omega\setminus\Omega_1}\int_{\Omega\setminus \Omega_1} W(u_j(x), u_j(y)) \, dx\, dy = 0,
\end{align}
and
\begin{align}\label{conv_33}
\lim_{j\to \infty} \int_{\Omega_1}\int_{\Omega\setminus \Omega_1} W(u_j(x), u_j(y)) \, dx\, dy = 0.
\end{align}

The identity~\eqref{conv_22} shows that the sequence $(u_j|_{\Omega\setminus \Omega_1})_j\subset L^p(\Omega\setminus \Omega_1)$ concentrates around $\pm\frac{1}{2}$, while~\eqref{conv_11} indicates that this is not the case for $(u_j|_{\Omega_1})_j\subset L^p(\Omega_1)$.  Now, let $\omega\subset \Omega\setminus \Omega_1$ and $\omega_1\subset \Omega_1$ be sets of positive $\mathcal L^n$-measure and $\varepsilon>0$ such that $(u_j|_{\omega})_j$ concentrates around $\frac{1}{2}$, and $(u_j|_{\omega_1})_j$ concentrates on the complement of $(\frac{1}{2}-\varepsilon, \frac{1}{2} +\varepsilon)\cup (-\frac{1}{2}-\varepsilon, -\frac{1}{2} +\varepsilon)$. 
 Such sets exist without loss of generality, otherwise replace $\frac{1}{2}$ by $-\frac{1}{2}$ for the set of concentrations of $(u_j|_\omega)_j$. With $\nu\in L_w^\infty(\omega_1;\mathcal Pr(\R))$ the Young measure generated by a suitable (non-relabeled) subsequence of $(u_j|_{\omega_1})_j$, it follows that 
\begin{align}\label{est77}
\lim_{j\to \infty} \int_{\omega}\int_{\omega_1} W(u_j(x), u_j(y)) \, dx\, dy =  |\omega| \int_{\omega_1} \int_{\R} W( \xi, \tfrac{1}{2}) \, d\nu_x(\xi)\, dx >0,
\end{align}
which contradicts~\eqref{conv_33}. 
The estimate in~\eqref{est77} makes use of the fact that by the choice of $\omega_1$, $\pm\frac{1}{2}\notin {\rm supp\,} \nu_x$ for a.e.~$x\in \omega_1$, along with the observation that $W(\xi, \frac{1}{2})=0$ if and only if $\xi\in \{-\frac{1}{2}, \frac{1}{2}\}$. 
\end{proof}

\begin{Remark}\label{rem:notsufficient}
We notice that condition~\eqref{ness} from Corollary~\ref{prop:ness}, which is necessary for structure-preserving relaxation of double integrals via separate convexification in the scalar setting, is in general not sufficient. 

Indeed, for the double integrand $W$ introduced in~\eqref{densityW2}, one has that $L_0(W)=K$ with $\widehat K=\{-\frac{1}{2}, \frac{1}{2}\}^2$, $K^{\rm sc} =K^{\rm co}= B_1^1(0,0)$ and $W^{\rm sc}=W^{\rm co}$; therefore, 
\begin{align*}
(L_0(W)^\wedge)^{\rm sc} =\widehat K^{\rm sc} = [-\tfrac{1}{2}, \tfrac{1}{2}]^2 = B_1^1(0,0)^\wedge = \widehat{K^{\rm co}} = L_0(W^{\rm co})^\wedge= L_0(W^{\rm sc})^\wedge,
\end{align*}
which is~\eqref{ness}. 
\end{Remark}

With Propositions~\ref{prop:counterexample} and~\ref{prop:counterexample2} at hand, it is not hard to generate counterexamples 
 also in the vectorial setting. 

\begin{Remark}\label{rem:generalization}
Let  $m\in \mathbb N$ with $m>1$ and $K$ as in Proposition~\ref{prop:counterexample} or~\ref{prop:counterexample2}. 
For $\xi, \zeta \in \R^m$, let $\xi', \zeta'\in \R^{m-1}$ be the vectors of the last $m-1$ components of $\xi$ and $\zeta$, respectively, so that $\xi = (\xi_1, \xi') \in \R^m$ and $\zeta = (\zeta_1, \zeta')\in \R^m$.

We define
\begin{align*}
W(\xi, \zeta) = {\rm dist}_1^p((\xi_1, \zeta_1), K) + |(\xi', \zeta')|^p =: W_1(\xi_1, \zeta_1) + W'(\xi', \zeta')
\end{align*}
for $(\xi, \zeta)\in \R^m\times \R^m$. 
Due to the decoupling of the first component from the last ones, it is straightforward to check that
\begin{align*}
I_W^{\rm rlx}(u)= I_{W_1}^{\rm rlx}(u_1) + I_{W'}(u') \qquad \text{for $u=:(u_1, u')\in L^p(\Omega;\R^m)$.}
\end{align*}  
Therefore, since $I_{W_1}^{\rm rlx}:L^p(\Omega)\to \R$ fails to be a double integral, so does $I_W^{\rm rlx}:L^p(\Omega;\R^m)\to \R$. 
\end{Remark} 
%
%

\section{Examples of structure-preserving relaxation}\label{sec:examples_relaxation}
In this last section, we provide examples of non-trivial relaxation where the double-integral structure is preserved and the integrands result from taking the separately convex envelope. 

\subsection{Integrands of distance type.}
Let $K=A\times A$ with a compact set $A\subset \R^m$ and $1\leq q\leq \infty$. 
We consider functions $W:\R^m\times \R^m\to \R$ defined via 
\begin{align}\label{W3}
W(\xi, \zeta)=\dist^p_q((\xi, \zeta), K)  = \dist^p_q((\xi, \zeta), A\times A)\quad \text{for $(\xi, \zeta)\in \R^m\times \R^m$,}
\end{align}
cf.~\eqref{dist}. 

As $K^{\rm sc} = K^{\rm co} = A^{\rm co}\times A^{\rm co}$ according to~\eqref{AcoxAco}, Lemma~\ref{lem:Wsc=Wco} implies that 
the separately convex envelope of $W$ is identical with the convex one, that is, 
\begin{align}\label{WscExample2}
W^{\rm sc}(\xi,\zeta) & = W^{\rm co}(\xi,\zeta)  
= \dist^p_q((\xi, \zeta), A^{\rm co}\times A^{\rm co}) \\ 
& =\begin{cases} 
\bigl(\dist^q(\xi, A^{\rm co}) + \dist^q(\zeta, A^{\rm co})\bigr)^{\frac{p}{q}} &\text{if $1\leq q<\infty$,}\\
\max\{\dist(\xi, A^{\rm co}), \dist(\zeta, A^{\rm co})\}^p & \text{if $q=\infty$,}
\end{cases}\nonumber
\end{align}
for $(\xi, \zeta)\in \R^m\times \R^m$; notice that the function $\dist(\cdot, A^{\rm co})$ on $\R^m$ denotes the Euclidean distance from $A^{\rm co}$. 

Under the additional hypothesis on $A$ that 
\begin{align}\label{hypoA}
\dist(\xi, A^{\rm co}) = \dist(\xi, A) \qquad \text{for all $\xi \notin A^{\rm co}$,}
\end{align}  
one can express the (separate) convexification of $W$ in the following way:  With any $\alpha, \beta\in A$,  
\begin{align}\label{WscExample}
W^{\rm sc}(\xi, \zeta) = \begin{cases} 0 & \text{if $\xi\in A^{\rm co}$ and $\zeta\in A^{\rm co}$,}\\
W(\xi,\zeta) & \text{if $\xi \notin A^{\rm co}$ and $\zeta\notin A^{\rm co}$,}\\
W(\alpha, \zeta)& \text{if $\xi\in A^{\rm co}$ and $\zeta\notin A^{\rm co}$,}\\
W(\xi, \beta) & \text{if $\xi\notin A^{\rm co}$ and $\zeta\in A^{\rm co}$,}
\end{cases} 
\end{align}
for $(\xi, \zeta)\in \R^m\times \R^m$. 
Notice that the condition~\eqref{hypoA} is always fulfilled for $A$ if $m=1$;
a sufficient condition for~\eqref{hypoA} in the vectorial case $m>1$ is for instance that $A\subset \R^m$ satisfies $\partial A^{\rm co} = \partial A$. 

We have the following characterization result. 

\begin{Proposition}\label{prop:fourwell}
Let $W$ be as in~\eqref{W3} with $A$ satisfying~\eqref{hypoA}. Then, $I_W^{\rm rlx} = I_{W^{\rm sc}}$.  
\end{Proposition}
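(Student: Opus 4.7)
The lower-bound inequality $I_W^{\rm rlx}\geq I_{W^{\rm sc}}$ is straightforward: Lemma~\ref{lem:Wsc=Wco} applied with $K=A\times A$ (using $K^{\rm sc}=K^{\rm co}$ from~\eqref{AcoxAco}) gives $W^{\rm sc}=W^{\rm co}$, which is convex. Hence $I_{W^{\rm sc}}$ is $L^p$-weakly lower semicontinuous and bounded from above by $I_W$, so it lies below $I_W^{\rm rlx}$. The heart of the proof is the construction, for each $u\in L^p(\Omega;\R^m)$, of a recovery sequence $u_j\weakly u$ in $L^p(\Omega;\R^m)$ with $\limsup_j I_W(u_j)\leq I_{W^{\rm sc}}(u)$.

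My plan is to split $\Omega$ according to whether the values of $u$ lie in $A^{\rm co}$: put $\Omega_1:=\{x\in\Omega:u(x)\in A^{\rm co}\}$ and $\Omega_2:=\Omega\setminus\Omega_1$. On $\Omega_2$ I leave $u$ unchanged; on $\Omega_1$ I oscillate the values into the wells $A$. Concretely, first approximate $u|_{\Omega_1}$, which is bounded since $A^{\rm co}$ is compact, in $L^p(\Omega_1;\R^m)$ by simple functions $v_k$ with values in $A^{\rm co}$; then, for each $k$, invoke Lemma~\ref{lem:oscillations} to obtain simple functions $w_{k,j}$ with values in $A$ such that $w_{k,j}\weaklystar v_k$ in $L^\infty(\Omega_1;\R^m)$ as $j\to\infty$; finally, set $u_{k,j}:=w_{k,j}\mathbbm{1}_{\Omega_1}+u\mathbbm{1}_{\Omega_2}$. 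The uniform $L^\infty$-bound on $(w_{k,j})_{k,j}$ from compactness of $A$, together with the strong convergence $v_k\to u|_{\Omega_1}$ in $L^p$, allows a diagonal extraction in $(k,j)$ producing a sequence that converges weakly to $u$ in $L^p(\Omega;\R^m)$.

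The crucial point to verify is the exact identity $I_W(u_{k,j})=I_{W^{\rm sc}}(u)$, valid for every $k$ and $j$. Decomposing the double integral into the four pieces $\Omega_i\times\Omega_\ell$ with $i,\ell\in\{1,2\}$, one checks: (i)~on $\Omega_1\times\Omega_1$, $W(w_{k,j}(x),w_{k,j}(y))=0$ since $(w_{k,j}(x),w_{k,j}(y))\in A\times A$, and the corresponding piece of $I_{W^{\rm sc}}(u)$ is likewise zero because $u\in A^{\rm co}$ a.e.~on $\Omega_1$; (ii)~on $\Omega_2\times\Omega_2$, the representation~\eqref{WscExample} gives $W(u(x),u(y))=W^{\rm sc}(u(x),u(y))$; (iii)~on the cross-pieces, for $\alpha\in A$ and $\zeta\notin A^{\rm co}$, the minimum defining $\dist_q((\alpha,\zeta),A\times A)$ is attained at first coordinate $a=\alpha$, so that $W(\alpha,\zeta)=\dist(\zeta,A)^p$, and hypothesis~\eqref{hypoA} turns this into $\dist(\zeta,A^{\rm co})^p=W^{\rm sc}(\xi,\zeta)$ for every $\xi\in A^{\rm co}$. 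In particular the cross-term integrand does not depend on $w_{k,j}$ and coincides pointwise with the corresponding piece of $I_{W^{\rm sc}}(u)$.

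The delicate step is precisely identity~(iii), which is where hypothesis~\eqref{hypoA} is essential: without it, $\dist(\zeta,A^{\rm co})$ can be strictly smaller than $\dist(\zeta,A)$ for some $\zeta\notin A^{\rm co}$, so inserting values in $A$ on $\Omega_1$ would strictly increase the cross contribution and the perfect matching $I_W(u_{k,j})=I_{W^{\rm sc}}(u)$ would break down. Once (i)--(iii) are in place, summing the four pieces gives the identity for all $k,j$, and the diagonal sequence extracted above realizes the recovery, completing the proof.
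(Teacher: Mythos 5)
Your proof is correct and follows essentially the same route as the paper: both rely on Lemma~\ref{lem:oscillations} to oscillate the $A^{\rm co}$-valued part of the field into $A$, keep the part outside $A^{\rm co}$ fixed, and verify the exact pointwise identity $W=W^{\rm sc}$ case by case using hypothesis~\eqref{hypoA}. The only organizational difference is that the paper first approximates $u$ globally by simple functions and applies Vitali--Lebesgue to pass to the limit in $I_{W^{\rm sc}}$, whereas you split $\Omega$ into $\{u\in A^{\rm co}\}$ and its complement and only approximate on the first piece; this is a cosmetic rearrangement of the same argument.
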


\begin{proof}
Since $W^{\rm sc}$ is separately convex, and hence $I_{W^{\rm sc}}$ $L^p$-weakly lower semicontinuous  (see~e.g.~\cite[Theorem~1.1]{Mun09},~\cite[Theorem~2.6]{Ped16}), it is clear that $I^{\rm rlx}_W\geq I_{W^{\rm sc}}$. To prove the reverse inequality, let $u\in L^p(\Omega;\R^m)$, which we approximate by a sequence of simple functions $(u_k)_k$ such that 
$u_k\to u$ in $L^p(\Omega;\R^m)$. Under consideration of the continuity and $p$-growth of $W^{\rm sc}$, the Vitali-Lebesgue convergence theorem yields that $I_{W^{\rm sc}}(u) = \lim_{k\to \infty} I_{W^{\rm sc}}(u_k)$. 
It remains to find for any simple function 
\begin{align}\label{simple_v}
v=\sum_{i=1}^{N} \xi^{(i)}\mathbbm{1}_{\Omega^{(i)}}
\end{align}
with $\xi^{(i)}\in \mathbb{R}^m$ and $\{\Omega^{(i)}\}_{i=1, \ldots,N}$ a decomposition of $\Omega$ into measurable subsets, a sequence $(v_j)_j\subset L^p(\Omega;\R^m)$ such that $v_j\weakly v$ in $L^p(\Omega;\R^m)$,  
and 
\begin{align}\label{est1}
\limsup_{j\to \infty} I_W(v_j) \leq I_{W^{\rm sc}}(v); 
\end{align} 
the claim follows then from a diagonalization argument. 

In the following, we take $v$ as in~\eqref{simple_v} and detail the construction of $(v_j)_j$ with the desired properties. If $\xi^{(i)}\in A^{\rm co}$, 
let $(v^{(i)}_j)_j\subset L^p(\Omega^{(i)};\R^m)$ be a sequence that converges weakly to $v$ in $L^p(\Omega^{(i)};\R^m)$ and takes values only in $A$, meaning,
\begin{align}\label{123}
v^{(i)}_j\in A \qquad \text{a.e.~in $\Omega^{(i)}$ for all $j\in \mathbb N$,}
\end{align} 
cf.~Lemma~\ref{lem:oscillations}. 
If $\xi^{(i)}\notin A^{\rm co}$, let $v^{(i)}_j$ for any $j\in \mathbb N$ be the constant function on $\Omega^{(i)}$ with value $\xi^{(i)}$. With these definitions, consider the sequence $(v_j)_j\subset L^p(\Omega;\R^m)$ given by 
\begin{align*}
v_j:=\sum_{i=1}^N v_j^{(i)}\mathbbm{1}_{\Omega^{(i)}} \qquad \text{for $j\in \mathbb N$.}
\end{align*}

By construction, $v_j \weakly v$ in $L^p(\Omega;\R^m)$, and 
\begin{align}\label{eqlast}
I_W (v_j) & = \int_\Omega\int_\Omega W(v_j(x), v_j(y))\, dx\, dy = \sum_{i, l=1}^{N} \int_{\Omega^{(i)}}\int_{\Omega^{(l)}} W(v_j^{(i)}(x), v_j^{(l)}(y))\, dx\,dy  \nonumber \\ & 
= \sum_{i, l=1}^{N} |\Omega^{(i)}|\,|\Omega^{(l)}| W^{\rm sc}(\xi^{(i)}, \xi^{(l)})  =  I_{W^{\rm sc}}(v) 
\end{align}
for all $j\in \mathbb{N}$, which implies~\eqref{est1} and concludes the proof; the third identity in~\eqref{eqlast} follows from the observation that for any $j\in \mathbb{N}$ and $i, l\in \{1, \ldots, N\}$, 
\begin{align}\label{ref}
W(v_j^{(i)}(x), v_j^{(l)}(y)) = 
W^{\rm sc}(\xi^{(i)}, \xi^{(l)})
 \qquad \text{for a.e.~$(x,y)\in \Omega^{(i)}\times \Omega^{(l)}$.}
\end{align}
 To see the latter, we distinguish three different cases. 
If $\xi^{(i)}, \xi^{(l)}\notin A^{\rm co}$, the functions $v_j^{(i)}$ and $v_j^{(l)}$ are constant, and $W^{\rm sc}(\xi^{(i)}, \xi^{(l)}) =  W(\xi^{(i)}, \xi^{(l)})$ by~\eqref{WscExample}. For $\xi^{(i)}, \xi^{(l)}\in A^{\rm co}$, both expressions in~\eqref{ref} are zero (almost everywhere) according to~\eqref{123} and~\eqref{WscExample}.
In the case $\xi^{(i)}\in A^{\rm co}$ and $\xi^{(l)}\notin A^{\rm co}$, we invoke again~\eqref{WscExample} to obtain that $W^{\rm sc}(\xi^{(i)},\xi^{(l)}) = W(\alpha, \xi^{(l)})$ for any $\alpha\in A$; hence,~\eqref{ref} holds in light of~\eqref{123}.
For $\xi^{(i)}\notin A^{\rm co}$ and $\xi^{(l)}\in A^{\rm co}$, the reasoning is analogous. 
\end{proof}

\subsection{Indicator functionals} 
For a symmetric, diagonal and compact set $K\subset \R^m\times \R^m$, we define the associated indicator functional $J_K$ via 
\begin{align}\label{indicator_intro}
L^p(\Omega;\R^m)\ni u\mapsto J_K(u):= I_{\chi_K}(u) = \int_\Omega\int_\Omega \chi_K(u(x), u(y))\,dx \,dy,
\end{align} 
where $\chi_K$ is the indicator function of $K$, 
  i.e., $\chi_K(\xi, \zeta)=0$ if $(\xi, \zeta)\in K$ and $\chi_K=\infty$ otherwise in $\R^m\times \R^m$. Note that $J_K$ can be expressed in terms of nonlocal inclusions as
\begin{align*}
J_K(u)=\begin{cases} 0 & \text{if $u\in \mathcal A_K$,}\\
\infty & \text{otherwise,}
\end{cases}
\end{align*}
for $u\in L^p(\Omega;\R^m)$; recall the definition of $\mathcal A_K$ in Section~\ref{sec:nonlocalinclusions}.

According to~\cite[Corollary~6.2]{KrZ19} and Theorem \ref{theo:Young}, the Young measure relaxation of $J_K$ is
	\begin{align*}
	J_K^{\mathcal Y}(\nu) &=\begin{cases} 0 &\text{
		${\rm supp\,}\nu\otimes \nu\subset \widehat K$ a.e.~in $\Omega\times \Omega$,} \\ \infty & \text{otherwise,} \end{cases}\\
	&= \begin{cases} 0 &\text{ 
		${\rm supp\,}\nu\otimes \nu\subset P$ a.e.~in $\Omega\times \Omega$ with $P\in \mathcal P_K$,} \\ \infty & \text{otherwise, } \end{cases}\\
	& =\begin{cases} 0 &\text{
		${\rm supp\,}\nu \subset A$ a.e.~in $\Omega$ with $A\times A\in \mathcal P_K$,} \\ \infty & \text{otherwise,} \end{cases}
	\end{align*}
	for $\nu\in L^\infty_w(\Omega;\mathcal Pr(\R^m))$, and~\eqref{Acal_Kinfty} provides
 a representation formula for the relaxation of $J_K$ with respect to the $L^p$-weak topology; precisely, for $u\in L^p(\Omega;\R^m)$,
  \begin{align}\label{relax_indi}
J_K^{\rm rlx}(u)& : = \inf \{ \liminf_{j\to \infty} J_K(u_j): 
 u_j\weakly u \text{ in $L^p(\Omega;\R^m)$}\} \\ &= \begin{cases} 0 & \text{if $u\in \mathcal B_{K, p}^\infty=\mathcal A_K^\infty$,} \nonumber \\
\infty & \text{otherwise,}
\end{cases}\ \\ & = \begin{cases} 0 & \text{if $u\in A^{\rm co}$ a.e.~in $\Omega$ with $A\times A\in \mathcal P_K$,}\\
\infty & \text{otherwise,}
\end{cases}  \\ & = \int_\Omega\int_\Omega \,\chi_{\bigl[\bigcup_{A\times A \in \mathcal P_K} A^{\rm co}\times A^{\rm co}\bigr]}(u(x), u(y)) \, dx\, dy = J_{K^{\rm rlx}},\nonumber
\end{align}
with $K^{\rm rlx}:=\bigcup_{A\times A \in \mathcal P_K} A^{\rm co}\times A^{\rm co}$; for the second identity, we invoke Theorem~\ref{prop:approx_inclusion} in combination with Remark~\ref{rem:BKinftyp}. 

It is generally not true that $K^{\rm rlx}$ coincides with the separately convex hull of $K$ (see~\cite[Example~4.6\,b)]{KrZ19}); yet, under the additional assumption that 
\begin{align*}
\widehat{K^{\rm sc}}
= \bigcup_{(\alpha, \beta)\in K} \ [\alpha, \beta]\times [\alpha, \beta],
\end{align*} 
which is for instance satisfied for $m=1$ (see~\cite[Lemma~4.7]{KrZ19}), 
it was shown in~\cite[Corollary~6.1]{KrZ19} that $J_{K}^{\rm rlx}=J_{K^{\rm sc}}$. Whether this identity holds in general, or equivalently, if $K^{\rm rlx} = \widehat {K^{\rm sc}}$ without further assumptions on $K$, 
remains unknown. 

In conclusion, we have seen that 
the relaxation of indicator functionals of the type~\eqref{indicator_intro} is always structure preserving.
%

\section*{Acknowledgements}
The authors would like to thank Stefan Kr\"omer for inspiring discussions.
CK was partially supported by a Westerdijk Fellowship from Utrecht University, by the Dutch Research Council NWO through the project TOP2.17.012, and by INdAM -GNAMPA through Programma Professori Visitatori a.a.~2018/2019. The hospitality of Dipartimento di Ingegneria Industriale at University of Salerno is gratefully acknowledged.  
Both authors would like to thank the Isaac Newton Institute for Mathematical Sciences, Cambridge, for its support and hospitality during the programme DNM (Design of New Materials), where work on this paper was undertaken. This work was supported by EPSRC grant no EP/R014604/1.


\bibliographystyle{abbrv}
\bibliography{NonlocalSup}

\begin{thebibliography}{10}

\bibitem{EDLL14}
E.~Abderrahim, D.~Xavier, L.~Zakaria, and L.~Olivier.
\newblock Nonlocal infinity {L}aplacian equation on graphs with applications in
  image processing and machine learning.
\newblock {\em Math. Comput. Simulation}, 102:153--163, 2014.

\bibitem{BMC18}
J.~C. Bellido and C.~Mora-Corral.
\newblock Lower semicontinuity and relaxation via {Y}oung measures for nonlocal
  variational problems and applications to peridynamics.
\newblock {\em SIAM J. Math. Anal.}, 50(1):779--809, 2018.

\bibitem{BMP15}
J.~C. Bellido, C.~Mora-Corral, and P.~Pedregal.
\newblock Hyperelasticity as a {$\Gamma$}-limit of peridynamics when the
  horizon goes to zero.
\newblock {\em Calc. Var. Partial Differential Equations}, 54(2):1643--1670,
  2015.

\bibitem{BeP06}
J.~Bevan and P.~Pedregal.
\newblock A necessary and sufficient condition for the weak lower
  semicontinuity of one-dimensional non-local variational integrals.
\newblock {\em Proc. Roy. Soc. Edinburgh Sect. A}, 136(4):701--708, 2006.

\bibitem{BrN18}
H.~Brezis and H.-M. Nguyen.
\newblock Non-local functionals related to the total variation and connections
  with image processing.
\newblock {\em Ann. PDE}, 4(1):Art. 9, 77, 2018.

\bibitem{Dac08}
B.~Dacorogna.
\newblock {\em Direct methods in the calculus of variations}, volume~78 of {\em
  Applied Mathematical Sciences}.
\newblock Springer, New York, second edition, 2008.

\bibitem{DMbook}
G.~Dal~Maso.
\newblock {\em An introduction to {$\Gamma$}-convergence}, volume~8 of {\em
  Progress in Nonlinear Differential Equations and their Applications}.
\newblock Birkh\"{a}user Boston, Inc., Boston, MA, 1993.

\bibitem{DFL18}
G.~Dal~Maso, I.~Fonseca, and G.~Leoni.
\newblock Asymptotic analysis of second order nonlocal {C}ahn-{H}illiard-type
  functionals.
\newblock {\em Trans. Amer. Math. Soc.}, 370(4):2785--2823, 2018.

\bibitem{Dol03}
G.~Dolzmann.
\newblock {\em Variational methods for crystalline microstructure -- analysis
  and computation}, volume 1803 of {\em Lecture Notes in Mathematics}.
\newblock Springer-Verlag, Berlin, 2003.

\bibitem{ELP13}
E.~Emmrich, R.~B. Lehoucq, and D.~Puhst.
\newblock Peridynamics: a nonlocal continuum theory.
\newblock In {\em Meshfree methods for partial differential equations {VI}},
  volume~89 of {\em Lect. Notes Comput. Sci. Eng.}, pages 45--65. Springer,
  Heidelberg, 2013.

\bibitem{FKR15}
R.~Ferreira, C.~Kreisbeck, and A.~M. Ribeiro.
\newblock Characterization of polynomials and higher-order {S}obolev spaces in
  terms of functionals involving difference quotients.
\newblock {\em Nonlinear Anal.}, 112:199--214, 2015.

\bibitem{FoL07}
I.~Fonseca and G.~Leoni.
\newblock {\em Modern methods in the calculus of variations: {$L^p$} spaces}.
\newblock Springer Monographs in Mathematics. Springer, New York, 2007.

\bibitem{GiO08}
G.~Gilboa and S.~Osher.
\newblock Nonlocal operators with applications to image processing.
\newblock {\em Multiscale Model. Simul.}, 7(3):1005--1028, 2008.

\bibitem{KrZ19}
C.~Kreisbeck and E.~Zappale.
\newblock Lower semicontinuity and relaxation of nonlocal
  {$L^\infty$}-functionals.
\newblock {\em Preprint, arXiv:1905.08832}, 2019.

\bibitem{KMS18}
M.~Kru\v{z}\'{\i}k, C.~Mora-Corral, and U.~Stefanelli.
\newblock Quasistatic elastoplasticity via peridynamics: existence and
  localization.
\newblock {\em Contin. Mech. Thermodyn.}, 30(5):1155--1184, 2018.

\bibitem{MMOZ}
J.~Matias, M.~Morandotti, D.~R. Owen, and E.~Zappale.
\newblock Relaxation of non-local energies for structured deformations with
  applications to plasticity.
\newblock {\em Preprint, arXiv:1907.02955, 2019}.

\bibitem{MeD15}
T.~Mengesha and Q.~Du.
\newblock On the variational limit of a class of nonlocal functionals related
  to peridynamics.
\newblock {\em Nonlinearity}, 28(11):3999--4035, 2015.

\bibitem{Mun09}
J.~Mu\~noz.
\newblock Characterisation of the weak lower semicontinuity for a type of
  nonlocal integral functional: the {$n$}-dimensional scalar case.
\newblock {\em J. Math. Anal. Appl.}, 360(2):495--502, 2009.

\bibitem{Ped97}
P.~Pedregal.
\newblock Nonlocal variational principles.
\newblock {\em Nonlinear Anal.}, 29(12):1379--1392, 1997.

\bibitem{Ped16}
P.~Pedregal.
\newblock Weak lower semicontinuity and relaxation for a class of non-local
  functionals.
\newblock {\em Rev. Mat. Complut.}, 29(3):485--495, 2016.

\bibitem{Rin18}
F.~Rindler.
\newblock {\em Calculus of variations}.
\newblock Universitext. Springer, Cham, 2018.

\bibitem{SaV12}
O.~Savin and E.~Valdinoci.
\newblock {$\Gamma$}-convergence for nonlocal phase transitions.
\newblock {\em Ann. Inst. H. Poincar\'{e} Anal. Non Lin\'{e}aire},
  29(4):479--500, 2012.

\bibitem{SOZ17}
Z.~Shi, S.~Osher, and W.~Zhu.
\newblock Weighted nonlocal {L}aplacian on interpolation from sparse data.
\newblock {\em J. Sci. Comput.}, 73(2-3):1164--1177, 2017.

\bibitem{Sil00}
S.~A. Silling.
\newblock Reformulation of elasticity theory for discontinuities and long-range
  forces.
\newblock {\em J. Mech. Phys. Solids}, 48(1):175--209, 2000.

\bibitem{SlT19}
D.~Slep\v{c}ev and M.~Thorpe.
\newblock Analysis of {$p$}-{L}aplacian regularization in semisupervised
  learning.
\newblock {\em SIAM J. Math. Anal.}, 51(3):2085--2120, 2019.

\end{thebibliography}

	\end{document}